\newcommand{\REM}[1]{\relax}
\newcommand{\Card}{\mathsf{Card}}
\begin{document}
\numberwithin{equation}{section}

\newcommand{\anglim}{\angle\lim}

\newcommand{\de}{\partial}
\newcommand{\Hol}{{\sf Hol}}
\newcommand{\Aut}{{\sf Aut}(\mathbb D)}
\newcommand{\loc}{L^1_{\rm{loc}}}
\newcommand{\Moeb}{\mathsf{M\ddot ob}}

\newcommand{\Mobius}{M\"obius }

\def\Re{{\sf Re}\,}
\def\Im{{\sf Im}\,}

\newcommand{\UH}{\mathbb{H}}
\newcommand{\UHi}{\mathbb{H}_i}
\newcommand{\Real}{\mathbb{R}}
\newcommand{\Natural}{\mathbb{N}}
\newcommand{\Complex}{\mathbb{C}}
\newcommand{\ComplexE}{\overline{\mathbb{C}}}
\newcommand{\Int}{\mathbb{Z}}

\newcommand{\UD}{\mathbb{D}}
\newcommand{\UC}{\partial\UD}
\newcommand{\clS}{\mathcal{S}}
\newcommand{\gtz}{\ge0}
\newcommand{\gt}{\ge}
\newcommand{\lt}{\le}
\newcommand{\fami}[1]{(#1_{s,t})}
\newcommand{\famc}[1]{(#1_t)}
\newcommand{\ts}{t\gt s\gtz}
\newcommand{\classCC}{\tilde{\mathcal C}}
\newcommand{\classS}{\mathcal S}
\newcommand{\U}{{\mathfrak U}}
\newcommand{\Ut}[1]{{\mathfrak U\hskip.1em}_{#1}\hskip-.09em}
\newcommand{\UF}{{\mathfrak U\hskip.08em}}
\newcommand{\Un}{{\mathfrak U\hskip.07em}_0\hskip-.07em}

\newcommand{\Maponto}
{\xrightarrow{\scriptstyle \!\mathsf{onto}\,}}

\let\N=\Natural
\let\R=\Real
\let\D=\UD
\let\RS=\ComplexE
\let\C=\Complex
\newcommand{\uH}{\mathbb{H}_{\mathrm{i}}}
\newcommand{\rH}{\mathbb{H}_1}
\newcommand{\mscrm}{\mathfrak{m}}
\newcommand{\mscrc}{\mathfrak{c}}
\newcommand{\mathup}[1]{\mathrm{#1}}

\newcommand{\uHc}{\overline{\mathbb{H}}_{\mathrm{i}}}

\newcommand{\Step}[2]{\begin{itemize}\item[{\bf Step~#1.}]\textit{#2}\end{itemize}}
\newcommand{\step}[2]{\begin{itemize}\item[\textit{Step\,#1:}]\textit{#2}\end{itemize}}
\newcommand{\case}[2]{\begin{itemize}\item[\textit{Case~#1:}]\textit{#2}\end{itemize}}
\newcommand{\caseM}[1]{\par\vskip.7ex\noindent\textit{Case~#1}.~}
\newcommand{\caseU}[2]{\vskip1.4ex\begin{trivlist}%
\item[\textbf{CASE~#1}:]\textbf{{\mathversion{bold}#2}}\end{trivlist}\vskip-0.3ex}
\newcommand{\parM}[1]{\par\vskip.7ex\noindent\textit{#1}.~}
\newcommand{\proofbox}{\hfill$\Box$}
\newcommand{\trivlistcorr}{\leftmargin=2.3em\labelsep=0em\labelwidth=2.3em\rightmargin=.5em\itemindent=0em}
\newcommand{\trivlistcorrM}{\leftmargin=2.3em\labelsep=0.5em\labelwidth=1.8em\rightmargin=.5em\itemindent=0em}
\newcommand{\claim}[2]{\vskip1.4ex\begin{list}{}{\trivlistcorr}\item[#1\hfill] {\it #2}\end{list}\vskip.5ex}
\newcommand{\claimM}[2]{\vskip1.4ex\begin{list}{}{\trivlistcorrM}\item[\textbf{Claim #1}:~] {\it #2}\end{list}}

\newcommand{\STOP}{\par\hbox to\textwidth{\color{red}\leaders\hbox{\,STOP\,}\hfil}\par}

\newcommand{\mcite}[1]{\csname b@#1\endcsname}

\newcommand{\di}{\mathrm{d}}
\newcommand{\DI}{\,\di}
\newcommand{\ddt}[1]{\frac{\di #1}{\di t}}
\newcommand{\DDT}{\ddt{}\,}

\newcommand{\diam}{\mathop{\mathsf{diam}}\nolimits}

\theoremstyle{theorem}
\newtheorem {result} {Theorem}
\setcounter {result} {64}
 \renewcommand{\theresult}{\char\arabic{result}}



\newcommand{\Spec}{\Lambda^d}
\newcommand{\SpecR}{\Lambda^d_R}
\newcommand{\Prend}{\mathrm P}




\def\cn{{\C^n}}
\def\cnn{{\C^{n'}}}
\def\ocn{\2{\C^n}}
\def\ocnn{\2{\C^{n'}}}
\def\je{{\6J}}
\def\jep{{\6J}_{p,p'}}
\def\th{\tilde{h}}


\def\dist{{\rm dist}}
\def\const{{\rm const}}
\def\rk{{\rm rank\,}}
\def\id{{\sf id}}
\def\aut{{\sf aut}}
\def\Aut{{\sf Aut}}
\def\CR{{\rm CR}}
\def\GL{{\sf GL}}
\def\Re{{\sf Re}\,}
\def\Im{{\sf Im}\,}
\def\U{{\sf U}}

\def\Cap{\mathop{\mathrm{cap}}}

\def\la{\langle}
\def\ra{\rangle}

\newcommand{\sgn}{\mathop{\mathrm{sgn}}}

\emergencystretch15pt \frenchspacing

\newtheorem{theorem}{Theorem}
\newtheorem{lemma}{Lemma}[section]
\newtheorem{proposition}{Proposition}
\newtheorem{corollary}{Corollary}

\theoremstyle{definition}
\newtheorem{definition}[lemma]{Definition}
\newtheorem{example}[lemma]{Example}

\theoremstyle{remark}
\newtheorem{remark}[lemma]{Remark}
\numberwithin{equation}{section}

\newcommand{\res}{\mathop{\mathrm{\,r\,e\,s\,}}\limits}

\newcommand{\distCE}{\mathop{\mathsc{dist}_\ComplexE}}

\newenvironment{mylist}{\begin{list}{}%
{\labelwidth=2em\leftmargin=\labelwidth\itemsep=.4ex plus.1ex
minus.1ex\topsep=.7ex plus.3ex
minus.2ex}%
\let\itm=\item\def\item[##1]{\itm[{\rm ##1}]}}{\end{list}}

\title[Parametric representation of univalent functions with BRFPs]{Parametric representation of univalent functions with boundary regular fixed points}
\author[P. Gumenyuk]{Pavel Gumenyuk$^*$}
\address{Department of Mathematics and Natural Sciences, University of Stavanger, N-4036 Stavanger, Norway.}
\email{pavel.gumenyuk@uis.no}

\keywords{Parametric Representation, univalent function, conformal mapping, boundary fixed point, Loewner equation, Loewner-Kufarev equation, infinitesimal generator, evolution family, Lie semigroup}

\subjclass[2010]{Primary: 30C35, 30C75;  Secondary: 30D05, 30C80, 34K35, 37C25, 22E99}

\thanks{$^*$\,Partially supported by the FIRB grant Futuro in Ricerca  ``Geometria Differenziale Complessa e
Dinamica Olomorfa" n.\,RBFR08B2HY and by \textit{Ministerio de Econom\'\i a y Competitividad (Spain)} project \hbox{MTM2015-63699-P}.}

\begin{abstract}
A classical result in the theory of Loewner's parametric representation states that
the semigroup $\Un$ of all conformal self-maps $\varphi$ of the unit disk $\D$ normalized by $\varphi(0) = 0$ and
$\varphi'(0) > 0$ can be obtained as the reachable set of the Loewner\,--\,Kufarev control system
$$
\frac{\di w_t}{\di t}=G_t\circ w_t,\quad t\geqslant0,\qquad w_0=\id_\UD,
$$
where the control functions $t\mapsto G_t\in\Hol(\UD,\C)$ form a convex cone. We extend this result to semigroups
$\UF[F]$ formed by all conformal self-maps of $\D$ with the prescribed finite set~$F$ of boundary regular fixed points and to their counterparts $\Ut\tau[F]$ for the case of self-maps having the Denjoy\,--\,Wolff point at~$\tau\in\overline\UD\setminus F$.
\end{abstract}

\maketitle

\tableofcontents

\let\ge=\geqslant
\let\le=\leqslant

\section{Introduction}
 Injectivity of a map  is a non-linear property. This fact leads to considerable complication in the study of univalent (\textit{i.e.}, injective holomorphic) functions. One of the tools that helps to overcome this difficulty is \textsl{Loewner's parametric representation.}  The most known version of Loewner's method represents the class~$\clS$ of all univalent functions $f$ in the unit disk ${\UD:=\{z:|z|<1\}}$ normalized by~$f(0)=0$, $f'(0)=1$, as the image of a convex cone w.r.t. a map defined via solutions to a differential equation.
This representation of the class~$\clS$ as well as many other applications of Loewner Theory, including recently discovered stochastic counterpart of Loewner's differential equation~\cite{Schramm}, deal mainly with families of conformal mappings onto nested plain domains, known as Loewner chains.

The framework of this paper lies within a different approach, originally due to Loewner himself~\cite{Loewner}, which has been systematically developed by Goryainov~[\mcite{Goryainov}\,--\,\mcite{Goryainov-Kudryavtseva}]. It is based on the simple idea that injectivity is preserved under composition. In particular,
the class~$\mathfrak U$ of all univalent  self-maps of~$\UD$ is  a semigroup w.r.t. the operation $(\psi,\varphi)\mapsto\psi\circ\varphi$.  Moreover, endowed with the usual topology of locally uniform convergence, $\mathfrak U$ becomes a topological semigroup. Berkson and Porta~\cite{Berkson-Porta} proved that every one-parameter semigroup~$(\phi_t)_{t\ge0}$ in~$\mathfrak U$, \textit{i.e.} a continuous semigroup homomorphism $\big([0,+\infty),\cdot+\cdot\big)\ni t\mapsto \phi_t\in\mathfrak U$, is a semiflow defined by the initial value problem
\begin{equation}\label{EQ_autonomousFlow}
 \DDT\phi_t(z)=G(\phi_t(z)),\quad t\ge0;\qquad \phi_0(z)=z\in\UD,
\end{equation}
where $G$ is a holomorphic function called the \textsl{(infinitesimal) generator} of~$(\phi_t)$. The set $\mathrm{T}\mathfrak U$ of all generators, which we will call the \textsl{infinitesimal structure} of the semigroup~$\mathfrak U$, is a convex cone described by the Berkson\,--\,Porta formula~\cite{Berkson-Porta},
\begin{equation}\label{EQ_BP}
G(z)=(\tau-z)(1-\overline\tau z)p(z),\quad \text{for all }z\in\UD,
\end{equation}
where $\tau\in\overline{\UD}$ and $p\in\Hol(\UD,\Complex)$ with $\Re p\ge0$.

Up to a certain extent, the map
\begin{equation}\label{EQ_exp}
\mathrm T\mathfrak U\ni G\mapsto\phi_1^G\in\mathfrak U,
\end{equation}
 where $(\phi_t^G)$ is the one-parameter semigroup whose generator is~$G$,  can be thought as an analogue of the exponential map in the theory of Lie groups, with $G$ playing the role of a tangent vector at the identity.

 However, our setting is quite different from that of (finite- or infinite-dimensional) Lie groups. In particular, the image the map~\eqref{EQ_exp} does not cover even a neighbourhood of~$\id_\UD$ in~$\mathfrak U$,  see, \textit{e.g.}, \cite[Corollary~3.5 and text below]{Fractional}. This makes impossible to recover the semigroup~$\mathfrak U$ from its infinitesimal structure~$\mathrm{T}\mathfrak U$ by means of~\eqref{EQ_exp}.
The same takes place if we restrict ourselves to the subsemigroup~$\Un:=\{\varphi\in\mathfrak U:\varphi(0)=0,~\varphi'(0)>0\}$.

Loewner~\cite{Loewner} realized that it is still possible to recover~$\Un$ from~$\mathrm{T}\Un$ if one considers a non-autonomous analogue of~\eqref{EQ_autonomousFlow} by replacing $G$ with a family $0\le t\mapsto G(\cdot,t)\in\mathrm T\Un$. Here $\mathrm T\Un$ stands for the convex cone formed by all generators~$G$ satisfying $(\phi^G_t)\subset\Un$, namely
$$
\mathrm T\Un=\big\{~\UD\ni z\mapsto\,-zp(z):~p\in\Hol(\UD,\C),~\Re p\ge0,~\Im p(0)=0~\big\}
$$
thanks to~\eqref{EQ_BP}. This leads to the initial value problem
\begin{equation}\label{EQ_LK-ODE-intro}
\DDT\varphi_{s,t}(z)=-\varphi_{s,t}(z)\,p\big(\varphi_{s,t}(z),t\big),\quad t\ge0;\qquad \varphi_{s,s}(z)=z\in\UD,
\end{equation}
where $G(z,t):=-zp(z,t)$ belongs to~$\mathrm T\Un$ for a.e.~$t\ge0$ and satisfies the conditions in Definition~\ref{DF_HVF}.

Equation~\eqref{EQ_LK-ODE-intro} is the classical Loewner\,--\,Kufarev ODE, see, \textit{e.g.}, \cite{Aleksandrov} or~\cite[Chapter~6]{Pommerenke}. It is known that the union of all non-autonomous semiflows~$(\varphi_{s,t})$ of~\eqref{EQ_LK-ODE-intro} corresponding to various choices of~$p$, coincides with~$\Un$. This fact is the essence of Loewner's parametric representation of~$\Un$ and it was used by de Branges in his proof~\cite{deBranges} of Bieberbach's famous conjecture.
If now we renormalize elements in~$(\varphi_{s,t})$, \textit{i.e.} if we consider $\varphi_{0,t}/\varphi_{0,t}'(0)$, and take the limit as $t\to+\infty$, we obtain the parametric representation of the class~$\clS$, which is usually meant when one refers to Loewner's parametric representation of conformal mappings\footnote{Loewner~\cite{Loewner} himself obtained the parametric representation of a dense subclass of~$\clS$. Later it was extended to the whole class by Pommerenke~\cite{Pommerenke-65, Pommerenke} and independently by Gutljanski\u\i~\cite{Gutljanski}.}.

Loewner's idea can be applied to other semigroups. Goryainov~\cite{Goryainov, GoryainovBRFP} and Goryainov and Ba~\cite{Goryainov-Ba}, see also \cite{Aleks1983, AleksST, Bauer}, established parametric representations for several subsemigroups $\mathfrak S\subset\mathfrak U$ based on reconstruction of a semigroup from its infinitesimal structure by means of a suitable analogue of the Loewner\,--\,Kufarev equation. Moreover, Loewner himself applied this method to a certain semigroup of matrices \cite{LoewnerMatrices}, see also~\cite{Chon}, and to monotone matrix functions~\cite{LoewnerSeminar,LoewnerMonotone}.
It is also worth to mention that very similar constructions appear in the study of subsemigroups of Lie groups in connection with Control Theory, see, \textit{e.g.}, \cite{LieSemigroups} and references therein.

However, it does not seem to be known any general idea of how one can determine whether a given semigroup~$\mathfrak S$ can be reconstructed from its infinitesimal structure. For subsemigroups of $\Hol(\UD,\UD)$ a necessary condition is univalence: thanks to the uniqueness of solutions to the initial value problem for the general version of the Loewner\,--\,Kufarev ODE (see equation~\eqref{EQ_general-LK-INI} in Sect.\,\ref{SS_param-represent}), all the functions $\varphi_{s,t}$ are univalent in~$\UD$. However, we do not know whether  the univalence of all elements of a semigroup~$\mathfrak S\subset\Hol(\UD,\UD)$ is ``close'' to being a sufficient condition.

Therefore, it seems to be reasonable to consider more examples of semigroups in~$\mathfrak U$ and try to answer the question whether they can be reconstructed from their infinitesimal structures. In this paper we deal with two families of semigroups: $\UF[F]$ and $\Ut\tau[F]$, defined as follows. For every finite set $F:=\{\sigma_1,\sigma_2,\ldots,\sigma_n\}\subset \partial\UD$, the semigroup $\UF[F]$ is the set of all~$\varphi\in\mathfrak U$ that have boundary regular fixed points at each~$\sigma_j$, $j=1,2,\ldots,n$, see Definition~\ref{DF_contact-BRFP}. Given $\tau\in\overline\UD\setminus F$, we define  $\Ut\tau[F]$ to be the subset of~$\UF[F]$ consisting of~$\id_\UD$ and all~$\varphi\in\UF[F]$ with the Denjoy\,--\,Wolff point at~$\tau$, see Definition~\ref{DF_DW}. We choose these families of semigroups by two main reasons. First of all, there has been an increasing interest in the study of holomorphic (injective and non-injective) self-maps of~$\UD$ with given boundary regular fixed points, see, \textit{e.g.}, \cite{Unkelbach1938,Unkelbach1940,CowenPommerenke,Poggi,PommVas,CDP1, MilnVas,ElinShoikhetTarkhanov,Frolova,GoryainovBRFP}. Secondly, the infinitesimal structure of such semigroups is well-studied, see, \textit{e.g.}, \cite{GoryainovFr,CDP2,AlexanderClarkMeasures,Goryainov-Kudryavtseva,ElinShoikhetTarkhanov}.

Note that there is no restriction on the number and choice of fixed points $\sigma_j$. In fact, for \textsl{any} finite set $F\subset\partial\UD$ and any $\tau\in\overline\UD\setminus F$, the semigroups $\UF[F]$ and $\Ut\tau[F]$ are non-trivial, see Example~\ref{EX_non-trivial}.

Our main result is the following theorem, see Sect.\,2 for the definition of Loewner-type parametric representation.

\begin{theorem}\label{TH_main}
The following semigroups $\mathfrak S$  admit Loewner-type parametric representation:
\begin{itemize}
\item[] $\mathfrak S:=\UF[F]$ with $\Card(F)\le 3$,\vskip.4ex
\item[] $\mathfrak S:=\Ut\tau[F]$ with $\tau\in\UC$ and $\Card(F)\le2$, and \vskip.4ex
\item[] $\mathfrak S:=\Ut\tau[F]$ with $\tau\in\UD$ and any finite set~$F\subset\UC$.
\end{itemize}
\end{theorem}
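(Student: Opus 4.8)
The plan is to prove that each of the three families of semigroups can be reconstructed from its infinitesimal structure via a Loewner--Kufarev-type equation. The overall strategy follows Loewner's classical scheme, adapted to the presence of prescribed boundary regular fixed points (BRFPs). The core of the argument is a two-way inclusion: every element of the semigroup $\mathfrak S$ must be expressible as $\varphi_{0,t}$ for some solution of the associated Loewner--Kufarev equation with controls drawn from the infinitesimal structure $\mathrm{T}\mathfrak S$, and conversely every such $\varphi_{0,t}$ must lie in $\mathfrak S$.

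First I would record the explicit description of the infinitesimal generators. By the Berkson--Porta formula~\eqref{EQ_BP} together with the known characterization of generators whose semiflow preserves the BRFPs at each $\sigma_j$ (the references \cite{GoryainovFr,CDP2,Goryainov-Kudryavtseva} in the excerpt), a generator $G$ belongs to $\mathrm{T}\mathfrak S$ precisely when the associated Herglotz function $p$ has nonnegative real part and satisfies finitely many boundary conditions at the points of $F$ (and, for $\Ut\tau[F]$, a Denjoy--Wolff normalization at $\tau$). The key structural fact I would establish is that $\mathrm{T}\mathfrak S$ is a \emph{convex cone}; convexity is what makes the non-autonomous Loewner--Kufarev flow stay inside the relevant class, and it is preserved because each boundary condition at a fixed point is a linear (or affine-linear) constraint on $p$.

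The heart of the proof is the forward direction: given an arbitrary $\varphi\in\mathfrak S$, I would construct a Loewner chain, i.e.\ a family $(\varphi_{s,t})$ solving the Loewner--Kufarev equation, that connects $\id_\UD$ to $\varphi$ with controls in $\mathrm{T}\mathfrak S$. The standard route is to embed $\varphi$ into a continuous one-parameter decreasing family of univalent self-maps, read off the generator from the time-derivative, and verify that the boundary regular fixed points are preserved along the whole path. Here the Cardinality restrictions ($\Card(F)\le 3$ in the interior-map case, $\Card(F)\le 2$ when $\tau\in\partial\UD$) enter decisively: when there are few enough fixed-point constraints, the cone $\mathrm{T}\mathfrak S$ is large enough (its closure has the right dimension relative to the constraints) that \emph{every} element of $\mathfrak S$ can be reached. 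I would treat the interior case $\tau\in\UD$ separately, since there the Denjoy--Wolff point is an interior fixed point and the classical normalization at an interior point (as for $\Un$, where $\tau=0$) makes the reachability argument work for \emph{any} finite $F$, by an argument parallel to the classical proof for $\Un$ built on~\eqref{EQ_LK-ODE-intro}.

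The hard part will be the forward reachability step in the boundary cases, specifically showing that the constructed generator genuinely lies in $\mathrm{T}\mathfrak S$ at almost every time, which requires controlling the boundary behavior of the evolving maps at the fixed points $\sigma_j$ uniformly along the flow; the angular-derivative (Julia--Wolff--Carath\'eodory) data at each $\sigma_j$ must vary in a way compatible with membership in the cone. This is precisely where the cardinality bounds are sharp: the dimension of the available control directions in $\mathrm{T}\mathfrak S$ must match or exceed the number of independent fixed-point data that have to be moved, and I expect the proof to verify this dimension count case by case, explaining why $\Card(F)\le 3$ (resp.\ $\le 2$) is the threshold beyond which the naive construction fails. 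The converse inclusion $\varphi_{0,t}\in\mathfrak S$ is comparatively routine: it follows from univalence of solutions to the Loewner--Kufarev ODE (noted in the excerpt) together with a differential inequality showing each boundary condition defining $\mathfrak S$ is propagated by the flow, since the controls satisfy those conditions pointwise in time.
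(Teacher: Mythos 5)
Your outline does not reproduce the paper's argument, and at two points it relies on claims that are either unproved or actually false. First, the heart of the matter --- producing, for a given $\varphi\in\mathfrak S$, a path inside $\mathfrak S$ joining $\id_\UD$ to $\varphi$ --- is exactly what you delegate to ``the standard route'': embed $\varphi$ in a one-parameter decreasing family and check that the fixed points are preserved. But the classical embedding (Theorem~\ref{TH_standRepr}) gives an evolution family with no control whatsoever on boundary behaviour, and there is no reason the intermediate maps should fix the points of~$F$. The paper's actual mechanism is different and genuinely geometric: for $\UF[F]$ with $\Card(F)\le3$ one only knows (via the chain rule for angular derivatives, Lemma~\ref{LM_ChainRule}) that the $\sigma_j$ are regular \emph{contact} points of each $\varphi_{0,u}$, and one then conjugates by a continuous family of automorphisms moving the contact images back to the $\sigma_j$; this is possible precisely because $\Aut(\UD)$ acts transitively on triples of boundary points, which is where the bound $\Card(F)\le3$ really comes from. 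For $\Ut\tau[F]$ with $\tau\in\UC$ one must in addition keep the Denjoy--Wolff point at $\tau$, which requires a multi-parameter family of intermediate domains (Lemma~\ref{LM_multiparam_family}), a conformal-radius subadditivity estimate (Lemma~\ref{LM_two_chains}) and a boundary three-point version of Loewner's lemma (Lemma~\ref{LM_3p_Loewner}); for $\tau\in\UD$ one builds an $n$-parameter family and selects a Lipschitz path along which the boundary arcs between consecutive preimages keep constant length. None of this is a ``dimension count'' on the cone $\mathrm{T}\mathfrak S$: that cone is a nontrivial convex cone for \emph{every} finite $F$ (Example~\ref{EX_non-trivial}), so its size is not the obstruction, and the paper does not prove the cardinality bounds are sharp --- the remaining cases are explicitly left as an open problem.

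Second, your closing claim that the inclusion $\varphi_{s,t}\in\mathfrak S$ is routine ``since the controls satisfy those conditions pointwise in time'' is false. The paper points out explicitly (after Definition~\ref{DF_Loewner-type_representation}, using \cite{BRFPLoewTheory}) that having $G(\cdot,t)\in\mathrm{T}\UF[\{1\}]$ for a.e.~$t$ does \emph{not} imply that the associated evolution family lies in $\UF[\{1\}]$: one needs in addition the local integrability of $t\mapsto G'(\sigma,t)$. So LPR1 does not imply LPR2, and the cone $\mathcal M_{\mathfrak S}$ must be defined with this extra integrability condition built in, as is done in the proof of Theorem~\ref{TH_embedd}. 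Both halves of your argument therefore need to be replaced, the first by an actual construction of the connecting family inside~$\mathfrak S$ and the second by the reduction to embeddability in evolution families together with the quantitative criteria of~\cite{BRFPLoewTheory}.
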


The Loewner-type parametric representation for the case $\mathfrak S=\Ut\tau[F]$ with ${\tau\in\UD}$ and ${\Card(F)=1}$
goes back to Unkelbach~\cite{Unkelbach1940}, who suggested a kind of discrete version of the parametric representation for~$\Un[\{1\}]$, yet with an important hypothesis left unproved, and used it to obtain a sharp estimate of $\varphi'(1)$ in terms of $\varphi'(0)$.
The Loewner-type parametric representation for $\Un[\{1\}]$ in the form of a differential equation, analogous to the Loewner\,--\,Kufarev ODE, was rigorously proved much later by Goryainov~\cite{GoryainovBRFP}.

To demonstrate a potential usage of Theorem~\ref{TH_main}, we obtain the analogue of the Loewner\,--\,Kufarev ODE for~$\Ut{\,1}[\{-1\}]$.

\begin{corollary}\label{CR_specific}
Let $T>0$. The class of all univalent $\varphi\in\Hol(\UD,\UD)$ with the Denjoy\,--\,Wolff point~$\tau=1$ and a boundary regular fixed point~$\sigma=-1$ of dilation~$\varphi'(\sigma)=e^{T}$ coincides with the set of all functions representable in the form $\varphi(z)=w_z(T)$ for all $z\in\UD$, where $w_z(t)$ is the unique solution to the initial value problem
\begin{equation}\label{EQ_LK-specific}
\ddt{w_z}=\tfrac14(1-w_z)^2(1+w_z)q(w_z,t),\quad t\in[0,T],\qquad w_z(0)=z,
\end{equation}
with some function $q:\UD\times[0,T]\to\C$ satisfying the following conditions:
\begin{itemize}
\item[(i)] for every $z\in\UD$, $q(z,\cdot)$ is measurable on~$[0,T]$;
\item[(ii)] for a.e.~$t\in[0,T]$, $q(\cdot,t)$ has the following integral representation
\begin{equation*}
q(z,t)=\int\limits_{\UC\setminus\{1\}}\!\!\frac{1-\kappa}{1+\kappa z}\,\DI\nu_t(\kappa),
\end{equation*}
where $\nu_t$ is a probability measure on $\UC\setminus\{1\}$.
\end{itemize}
\end{corollary}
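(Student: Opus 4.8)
The plan is to specialize the parametric-representation machinery behind Theorem~\ref{TH_main} to the semigroup $\mathfrak S:=\Ut{1}[\{-1\}]$, which is exactly the second case of that theorem (here $\tau=1\in\UC$ and $\Card(\{-1\})=1\le2$). Granting the theorem, $\mathfrak S$ is the reachable set of the Loewner--Kufarev control system \eqref{EQ_general-LK-INI} whose controls are Herglotz vector fields taking values, for a.e.\ $t$, in the infinitesimal structure $\mathrm T\mathfrak S$; conversely every solution produced by such a control lands in $\mathfrak S$. The proof then splits into three tasks: (a) compute $\mathrm T\mathfrak S$ explicitly, (b) rewrite the abstract control system as the concrete ODE \eqref{EQ_LK-specific}, and (c) identify the boundary dilation $\varphi'(-1)$ with $e^T$.

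For (a) I would begin with the Berkson--Porta formula \eqref{EQ_BP} at $\tau=1$, giving $G(z)=(1-z)^2p(z)$ with $\Re p\ge0$. Membership of the one-parameter semigroup $(\phi^G_t)$ in $\UF[\{-1\}]$ means, via the correspondence between boundary regular fixed points of $(\phi^G_t)$ and boundary regular null points of $G$, that $-1$ is a boundary regular null point of $G$; since $(1-z)^2\to4$ there, this forces $\angle\lim_{z\to-1}p(z)=0$ together with finiteness of the angular derivative. Feeding this constraint into the Herglotz representation of $p$ yields $p(z)=\tfrac14(1+z)q(z)$ with $q(z)=\int_{\UC\setminus\{1\}}\frac{1-\kappa}{1+\kappa z}\,\di\nu(\kappa)$ for a finite positive measure $\nu$; the point $\kappa=1$ is dropped because it corresponds to the Denjoy--Wolff direction and its kernel vanishes identically. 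Positivity $\Re p\ge0$ is then immediate once one observes that each extremal kernel $z\mapsto(1-\kappa)\frac{1+z}{1+\kappa z}$ is a \Mobius map sending $\UD$ onto the right half-plane (its boundary values are purely imaginary and its value at the origin, $1-\kappa$, has positive real part for $\kappa\ne1$). The crucial book-keeping identity is $G'(-1)=\angle\lim_{z\to-1}G(z)/(z+1)=q(-1)=\|\nu\|$.

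For (b) and (c) I would substitute this description of $\mathrm T\mathfrak S$ into \eqref{EQ_general-LK-INI}, which turns it into the ODE \eqref{EQ_LK-specific}; the requirements of Definition~\ref{DF_HVF} on the Herglotz vector field give the measurability of $t\mapsto q(z,t)$ of condition~(i), while the integral form of each $q(\cdot,t)$ is condition~(ii). To match the dilation, differentiate $\frac{\partial}{\partial t}\varphi_{0,t}(z)=G(\varphi_{0,t}(z),t)$ in $z$ and let $z\to-1$, using $\varphi_{0,t}(-1)=-1$, to obtain $\frac{\di}{\di t}\log\varphi_{0,t}'(-1)=G'(-1,t)=\|\nu_t\|$, hence $\log\varphi_{0,T}'(-1)=\int_0^T\|\nu_t\|\,\di t$. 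Normalizing each $\nu_t$ to a probability measure makes $\|\nu_t\|\equiv1$, so that $\varphi_{0,T}'(-1)=e^T$ precisely. A standard time-change argument shows this normalization costs nothing: given any admissible control with $\int_0^S\|\nu_t\|\,\di t=T$, reparametrizing in the variable $\log\varphi_{0,t}'(-1)$ produces an equivalent unit-speed control on $[0,T]$ generating the same map. Combining the ``into'' and ``onto'' parts of the representation yields both inclusions of the asserted equality, the membership $\varphi\in\Ut{1}[\{-1\}]$ (in particular the Denjoy--Wolff point being $1$) being guaranteed by Theorem~\ref{TH_main}.

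The step I expect to be the main obstacle is (a): pinning down the exact kernel and the correct normalization, i.e.\ identifying the extreme rays of the cone $\mathrm T\mathfrak S$ with the point masses $\nu=\delta_\kappa$ and checking that the single constraint ``$-1$ is a boundary regular null point'' is captured precisely by the factor $(1+z)$ and by letting $\nu$ live on $\UC\setminus\{1\}$. The dilation accounting in (c) is then routine, the only delicate point being the justification of the time-reparametrization so that the terminal time equals $T$ rather than the mass $\int\|\nu_t\|\,\di t$.
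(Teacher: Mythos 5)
Your proposal is correct and follows essentially the same route as the paper: reduce to Theorem~\ref{TH_main}, identify the cone of admissible generators $G_t(z)=\tfrac14(1-z)^2(1+z)q(z,t)$ with the stated Herglotz-type kernel normalized by $G_t'(-1)=1$, and reparametrize time by $\log\varphi'_{0,t}(-1)$ so that the terminal time equals $T$. The only differences are presentational: the paper imports the integral representation from Goryainov--Kudryavtseva and Goryainov rather than rederiving it from the Berkson--Porta formula, and it carries out the time change at the level of the evolution family --- showing via uniqueness of the Denjoy--Wolff point that the flow is constant on intervals where $\log\varphi'_{0,t}(-1)$ is, and invoking Theorem~\ref{TH_differentiability} to confirm the reparametrized family is still an evolution family --- which is precisely the ``delicate point'' you flagged.
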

\begin{remark}
It follows from \cite[Theorem\,1.1]{BRFPLoewTheory} and the proof of Corollary~\ref{CR_specific} that for any $t\in[0,T]$ the angular derivatives of $z\mapsto w_z(t)$ at $\sigma=-1$ and at~$\tau=1$ equal $e^{t}$ and $\exp(-\int_0^t\nu_s(\{-1\})\DI s)$, respectively.
\end{remark}

The paper is organized as follows. In Sect.\,\ref{S_Prelim} we collect some material from the theory of holomorphic self-maps of~$\UD$ and modern Loewner Theory necessary for our purposes. The main problem we address in this paper is stated in  Sect.\,\ref{S_PrStatement}, where we also reformulate it as a problem of embedding in evolution families, see Theorem~\ref{TH_embedd}.

 In Sect.\,\ref{S_diff_theorems}, we present some results on evolution families in~$\UF[F]$ and $\Ut\tau[F]$, in particular Theorem~\ref{TH_differentiability}, which we believe are closely related to the topic although they are not used in the proof of the main theorem. Finally, in Sect.\,\ref{S_PROOF}, we prove Theorem~\ref{TH_main} and Corollary~\ref{CR_specific}, while a question concerning cases not covered by Theorem~\ref{TH_main} is raised in Sect.\,\ref{S_OpenProblem}.

\section{Preliminaries}\label{S_Prelim}
\subsection{Regular fixed and contact points}\label{SS_RBFP}
For a function $f:\UD\to\Complex$ by $\anglim_{z\to\sigma}f(z)$, where $\sigma\in\UC$, we will denote the \textsl{angular} (called also \textsl{non-tangential}) \textsl{limit} of $f$ at~$\sigma$. If this limit exists, we will denote it, as usual, by~$f(\sigma)$. If, in addition, $\anglim_{z\to\sigma}\frac{f(z)-f(\sigma)}{z-\sigma}$ exists, finite or infinite, it is called the \textsl{angular derivative} of $f$ at~$\sigma$ and denoted by~$f'(\sigma)$. Note that if $f'(\sigma)\in\Complex$, then the limit $\anglim_{z\to\sigma}f'(z)$ exists and coincides with~$f'(\sigma)$, see, \textit{e.g.},~\cite[Proposition~4.7 on p.\,79]{Pommerenke2}. \REM{However, in general $\anglim_{z\to\sigma}f'(z)$ may not exist if $f'(\sigma)=\infty$.}

Below we formulate a version of the classical Julia\,--\,Wolff\,--\,Charath\'eodory Theorem, see, \textit{e.g.}, \cite[Theorem~1.2.5, Proposition~1.2.6, Theorem~1.2.7]{Abate}.
\begin{result}[Julia\,--\,Wolff\,--\,Charath\'eodory]\label{TH_JWC}
Let $\varphi\in\Hol(\D,\D)$ and ${\sigma\in\UC}$. Then the following statements hold:
\begin{mylist}
\item[(i)] If $\varphi(\sigma):=\anglim_{z\to\sigma}\varphi(z)$ exists and belongs to~$\UC$, then the angular derivative\linebreak $\varphi'(\sigma):=\vphantom{\int\limits^1}\anglim_{z\to\sigma}\frac{\varphi(z)-\varphi(\sigma)}{z-\sigma}$ exists, finite or infinite.

    \vskip5mm

\item[(ii)] The following assertions are equivalent:\\
\begin{itemize}
\item[(ii.1)] the angular limit $\varphi(\sigma):=\anglim_{z\to\sigma}\varphi(z)$ exists and belongs to~$\UC$ and the angular derivative $\varphi'(\sigma)$ is finite;
\item[(ii.2)] $\alpha_\varphi(\sigma):=\displaystyle\vphantom{\int\limits_0^1}\liminf_{z\to\sigma}\dfrac{1-|\varphi(z)|}{1-|z|}<+\infty$;
\item[(ii.3)] there exists $A>0$ and $\omega\in\UC$ such that
\begin{equation}\label{EQ_JuliaLemma}
\frac{|\omega-\varphi(z)|^2}{1-|\varphi(z)|^2}\le A\frac{|\sigma-z|^2}{1-|z|^2}\quad\text{for all $z\in\UD$.}
\end{equation}
\end{itemize}\vskip.2ex
\item[(ii)] If assertions (ii.1)\,--\,(ii.3) take place, then $\varphi(\sigma)=\omega$ and ${\varphi'(\sigma)=\omega\,\overline\sigma\alpha_\varphi(\sigma)=\omega\,\overline\sigma A_0}$, where $A_0$ is the minimal value of~$A$ for which~\eqref{EQ_JuliaLemma} holds. In particular, $\overline\omega\,\sigma\varphi'(\sigma)>0$.
\end{mylist}
\end{result}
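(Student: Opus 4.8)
The statement is the classical Julia\,--\,Wolff\,--\,Charath\'eodory theorem, and the plan is to deduce it from the Schwarz\,--\,Pick inequality together with a transfer of the problem to the right half-plane $\UH=\{w\in\C:\Re w>0\}$ through the Cayley map. I would organize the argument around three cores: Julia's Lemma, which yields the implication (ii.2)$\Rightarrow$(ii.3); the horocyclic confinement, which yields (ii.3)$\Rightarrow$(ii.1) together with the identity $\varphi(\sigma)=\omega$; and the existence of the angular derivative, which gives part~(i) and the value in the final assertion.

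First I would establish Julia's Lemma. Writing the Schwarz\,--\,Pick inequality in the invariant form
\[
\frac{|1-\overline{\varphi(w)}\,\varphi(z)|^2}{1-|\varphi(z)|^2}\le\frac{1-|\varphi(w)|^2}{1-|w|^2}\cdot\frac{|1-\bar w z|^2}{1-|z|^2},\qquad z,w\in\UD,
\]
I would, under (ii.2), pick a sequence $w=z_n\to\sigma$ realizing the liminf $\alpha_\varphi(\sigma)$; since then $|\varphi(z_n)|\to1$, a subsequence gives $\varphi(z_n)\to\omega\in\UC$. Letting $n\to\infty$ and using $|1-\bar\sigma z|=|\sigma-z|$ and $|1-\bar\omega\,\varphi(z)|=|\omega-\varphi(z)|$ yields exactly~\eqref{EQ_JuliaLemma} with $A=\alpha_\varphi(\sigma)$ and this $\omega$, whence $A_0\le\alpha_\varphi(\sigma)$. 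Conversely, restricting~\eqref{EQ_JuliaLemma} to the radius $z=r\sigma$, $r\to1^-$, and taking $\liminf$ gives $\alpha_\varphi(\sigma)\le A_0$; hence $\alpha_\varphi(\sigma)=A_0$.

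Next I would extract the boundary value. Inequality~\eqref{EQ_JuliaLemma} confines $\varphi$ to horocycles internally tangent to $\UC$ at $\omega$, and inside any Stolz angle at $\sigma$ one has $|\sigma-z|^2/(1-|z|^2)\to0$; thus the right-hand side of~\eqref{EQ_JuliaLemma} tends to $0$, forcing $\varphi(z)\to\omega$ nontangentially. This proves $\varphi(\sigma)=\omega\in\UC$ and settles (ii.3)$\Rightarrow$(ii.1). The remaining implication (ii.1)$\Rightarrow$(ii.2) is elementary: along the radius $z=r\sigma$ one has $1-|\varphi(z)|\le|\varphi(\sigma)-\varphi(z)|$ and $|\sigma-z|=1-|z|$, so finiteness of the difference quotient bounds $(1-|\varphi(z)|)/(1-|z|)$ and gives $\alpha_\varphi(\sigma)<\infty$.

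The core of the theorem, and the step I expect to be the main obstacle, is the existence of the angular derivative (part~(i) and the value formula). After the rotations $z\mapsto\bar\sigma z$ and $\varphi\mapsto\bar\omega\,\varphi$, which reduce to $\sigma=\omega=1$, I would conjugate by the Cayley map $C(z)=(1+z)/(1-z)$, obtaining a holomorphic self-map $\Phi:=C\circ\varphi\circ C^{-1}$ of $\UH$ for which $\sigma$ corresponds to $\infty$ and the difference quotient transforms into $(w+1)/(\Phi(w)+1)$. Everything then reduces to the existence of the angular limit $c:=\anglim_{w\to\infty}\Phi(w)/w\in[0,+\infty)$; translating back gives $\varphi'(\sigma)=\omega\bar\sigma\,\alpha_\varphi(\sigma)$ with $\alpha_\varphi(\sigma)=1/c$ (read as $+\infty$ when $c=0$), and in particular $\bar\omega\sigma\,\varphi'(\sigma)=1/c>0$ in the finite case. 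To obtain $c$ I would set $c:=\inf_{w\in\UH}\Re\Phi(w)/\Re w$, which by the half-plane form of Julia's inequality equals $1/A_0$ and is finite and nonnegative; then $\Psi(w):=\Phi(w)-cw$ satisfies $\Re\Psi\ge0$, and the Herglotz\,--\,Nevanlinna representation of such a function forces $\Psi(w)/w\to0$ angularly at $\infty$ (the linear term of the representation must vanish by minimality of $c$), so that $\Phi(w)/w\to c$. It is this angular-limit analysis, rather than the soft Schwarz\,--\,Pick estimate, that constitutes the delicate part of the proof.
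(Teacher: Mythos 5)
This statement is the classical Julia--Wolff--Carath\'eodory theorem; the paper does not prove it but quotes it as a known result with an explicit reference to Abate's book, so there is no in-paper argument to compare yours against. Your sketch is, in outline, the standard proof found in those references, and it is correct: the invariant Schwarz--Pick inequality evaluated along a sequence realizing $\alpha_\varphi(\sigma)$ gives (ii.2)$\Rightarrow$(ii.3) together with $A_0=\alpha_\varphi(\sigma)$; the horocycle estimate inside a Stolz angle (where $|\sigma-z|^2/(1-|z|^2)\to0$) gives (ii.3)$\Rightarrow$(ii.1) and $\varphi(\sigma)=\omega$; the radial estimate gives (ii.1)$\Rightarrow$(ii.2); and the half-plane reduction with $c:=\inf_{w\in\UH}\Re\Phi(w)/\Re w=1/A_0$ plus the Herglotz--Nevanlinna representation of $\Psi=\Phi-cw$ yields the existence and the value of the angular derivative, including the infinite case $c=0$ needed for part~(i). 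The one step you should write out in full in a complete proof is the last: the claim that the representation forces $\anglim_{w\to\infty}\Psi(w)/w$ to equal the linear coefficient $\gamma$ (which then vanishes by minimality of $c$, since the representation also gives $\Re\Psi(w)\ge\gamma\,\Re w$) is itself a small lemma requiring a dominated-convergence estimate on the Nevanlinna integral in an angular region at $\infty$, and one should check that Stolz angles at $\sigma$ correspond under the Cayley map to such regions; both points are routine. It is worth noting that this same Nevanlinna-representation device is the one the paper does use explicitly, in the proof of Lemma~\ref{LM_3p_Loewner}.
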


\begin{definition}\label{DF_contact-BRFP}
If the equivalent conditions (ii.1)\,--\,(ii.3) in the above theorem hold, then the point~$\sigma$ is said to be a \textsl{regular contact point}  of the self-map~$\varphi$. If in addition, $\varphi(\sigma)=\sigma$, then $\sigma$ is called a \textsl{boundary regular fixed point} (abbreviated as BRFP) of~$\varphi$.
\end{definition}

By the classical Denjoy\,--\,Wolff Theorem (see, \textit{e.g.}, \cite[Theorem~1.2.14, Corollary~1.2.16, Theorem~1.3.9]{Abate}), for any $\varphi\in\Hol(\UD,\UD)\setminus\{\id_\UD\}$ there exists a unique~$\tau\in\overline\UD$ with the following property: $\tau$ is either internal fixed point of~$\varphi$, \textit{i.e.} $\tau\in\UD$ and $\varphi(\tau)=\tau$, or $\tau$ is a boundary regular fixed point of~$\varphi$ with $\varphi'(\tau)\le1$. Moreover, if $\varphi$ is not an elliptic automorphism, then the iterates $\varphi^{\circ n}\to\tau$ locally uniformly in~$\UD$ as $n\to+\infty$.
\begin{definition}\label{DF_DW}
In the above notation, $\tau$ is called the \textsl{Denjoy\,--\,Wolff point} (in short, \textsl{DW-point}) of~$\varphi$.
\end{definition}

\begin{remark}\label{RM_JWC}
With the help of conformal mapping,  Theorem~\ref{TH_JWC} can be applied to self-maps of the right half-plane ${\UH:=\{\zeta:\Re\zeta>0\}}$. In this way, for any $f\in\Hol(\UH,\UH)$ inequality~\eqref{EQ_JuliaLemma} leads to
\begin{equation}\label{EQ_Julia_half-plane}
 \Re f(z)\ge f'(\infty)\, \Re z\quad\text{for all~}z\in\UH,
\end{equation}
where $f'(\infty):=\anglim_{z\to\infty}f(z)/z$ exists finitely, with $f'(\infty)\ge0$. Thanks to the Maximum Principle applied to the harmonic function~$z\mapsto\Re f(z)-f'(\infty)\, \Re z$, if equality holds in~\eqref{EQ_Julia_half-plane} at some point~$z\in\UH$, then it holds for all~$z\in\UH$.
\end{remark}

\begin{remark}\label{RM_CowenPomm}
It is known, see, \textit{e.g.}, \cite[p.\,275]{CowenPommerenke}, that if $\sigma_1$ and $\sigma_2$ are BRFP's of a self-map~$\varphi\in\Hol(\UD,\UD)$, then $\varphi'(\sigma_1)\,\varphi'(\sigma_2)\ge1$, with strict inequality sign unless $\varphi$ is an automorphism.
\end{remark}

\subsection{Parametric Representation and evolution families in the disk}\label{SS_param-represent}

A general form of the non-autonomous analogue of equation~\eqref{EQ_autonomousFlow}, suitable for our purpose, was suggested by Bracci, Contreras and D\'\i{}az-Madrigal in their seminal paper~\cite{BCM1}. The role of infinitesimal generators in their equation is played by the so-called \textsl{Herglotz vector fields}.

\begin{definition}[\cite{BCM1}]\label{DF_HVF}
A function $G:\mathbb{D}\times[0,+\infty)\to \mathbb{C}$ is called a \textsl{Herglotz vector field (in the unit disk)} if it satisfies the following conditions:

\begin{mylist}
\item[HVF1.] for every $z\in\mathbb{D}$, the function $[0,+\infty)\ni t\mapsto G(z,t)$ is measurable;

\item[HVF2.] for a.e. $t\in[0,+\infty),$ the function $\mathbb{D}\ni z\mapsto G_t(z):=G(z,t)$ is an infinitesimal generator, \textit{i.e.} $G_t\in\mathrm{T}\mathfrak{U}$;

\item[HVF3.] for any compact set $K\subset\mathbb{D}$ and any $T>0$ there
exists a non-negative locally integrable function $k_{K,T}$ on~$[0,+\infty)$ such that $|G(z,t)|\le k_{K,T}(t)$ for all $z\in K$ and a.e.~$t\in[0,T].$
\end{mylist}
\end{definition}

It is known~\cite[Theorem\,4.4]{BCM1} that every Herglotz vector field~$G$ is \textsl{semicomplete}, {\it i.e.} for any $z\in\UD$ and any~$s\ge0$ the initial value problem
\begin{equation}\label{EQ_general-LK-INI}
\DDT\varphi_{s,t}(z)=G\big(\varphi_{s,t}(z),t\big),\quad \varphi_{s,s}(z)=z,
\end{equation}
has a unique solution $t\mapsto\varphi_{s,t}(z)$ defined for all~$t\ge s$.

Thanks to the Berkson\,--\,Porta formula~\eqref{EQ_BP}, any Herglotz vector field~$G$ can be expressed in the form $G(z,t)=\big(\tau(t)-z\big)\big(1-\overline{\tau(t)}z\big)\,p(z,t)$, where $\tau:[0,+\infty)\to\overline\UD$ and $\Re p\ge0$. In this way, Herglotz vector fields can be characterized via the \textsl{Berkson\,--\,Porta data}~$(p,\tau)$, see \cite[Theorem\,4.8]{BCM1}. In particular, if we set $\tau\equiv0$ and $p(0,\cdot)\equiv1$, then assuming that $p(z,\cdot)$ is measurable for each~$z\in\UD$, from equation~\eqref{EQ_general-LK-INI} we obtain the classical Loewner\,--\,Kufarev ODE~\eqref{EQ_LK-ODE-intro}. It is known that the union of all semiflows, \textit{i.e.} the \textsl{reachable set} of~\eqref{EQ_LK-ODE-intro}, coincides with the class~$\mathfrak{U}_0$ of all univalent~$\varphi\in\Hol(\UD,\UD)$ with $\varphi(0)=0$ and~${\varphi'(0)>0}$.

The latter statement is (one of essentially equivalent) formulation of Loewner's much celebrated \textsl{Parametric Representation} of univalent functions due to Loewner~\cite{Loewner}, Kufarev~\cite{Kufarev1943}, Pommerenke~\cite{Pommerenke-65}, \cite[Chapter\,6]{Pommerenke}, and Gutljanski~\cite{Gutljanski}. Note that $\mathfrak{U}$ is a semigroup w.r.t. the composition operation~$(\psi,\varphi)\mapsto\psi\circ\varphi$. A natural question arises: given a subsemigroup~$\mathfrak{S}\subset\Hol(\UD,\UD)$, is it possible to represent~$\mathfrak{S}$ using the same idea, \textit{i.e.} as the reachable set of a suitable special case of~\eqref{EQ_general-LK-INI}?
More precisely, we introduce the following definition.

For $\mathfrak{S}\subset\Hol(\UD,\UD)$ we denote by $\mathrm{T}\mathfrak{S}$, further on referred to as the \textsl{infinitesimal structure} of~$\mathfrak S$, the set of all infinitesimal generators~$G$ giving rise, via equation~\eqref{EQ_autonomousFlow}, to one-parameter semigroups $(\phi_t^G)\subset\mathfrak S$.

\begin{definition}\label{DF_Loewner-type_representation}
We say that a subsemigroup~$\mathfrak{S}\subset\Hol(\UD,\UD)$ \textsl{admits Loewner-type parametric representation} if there exists a convex cone $\mathcal M_{\mathfrak S}$ of Herglotz vector fields in~$\UD$ with the following properties:
\begin{mylist}
\item[LPR1.] for every $G\in\mathcal M_{\mathfrak S}$, we have $G(\cdot,t)\in \mathrm{T}\mathfrak{S}$ for a.e.~$t\ge0$;

\item[LPR2.] for every $G\in\mathcal M_{\mathfrak S}$, the solution $\varphi_{s,t}$ to the initial value problem~\eqref{EQ_general-LK-INI} satisfies $\varphi_{s,t}\in\mathfrak S$ for any~$s\ge0$ and any~$t\ge s$;

\item[LPR3.] for every $\varphi\in\mathfrak S$ there exists $G\in\mathcal M_{\mathfrak S}$ such that $\varphi=\varphi_{s,t}$ for some $s\ge0$ and~$t\ge s$, where $\varphi_{s,t}$ stands, as above, for the solution to~\eqref{EQ_general-LK-INI}.
\end{mylist}
\end{definition}

The main results of~\cite{BCM1} says that similarly to the theory of one-parameter semigroups, the semiflows of~\eqref{EQ_general-LK-INI} can be characterized in an intrinsic way without appealing to differential equations. This is fact will play a very important role in our argument.

\begin{definition}[\cite{BCM1}]\label{DF_evol_family}
A family $(\varphi_{s,t})_{0\leq s\leq t}$ of
holomorphic self-maps of the unit disk is called an {\sl evolution
family} if it satisfies the following conditions:
\begin{mylist}
\item[EF1.] $\varphi_{s,s}=\id_{\mathbb{D}}$ for any~$s\ge0$;

\item[EF2.] $\varphi_{s,t}=\varphi_{u,t}\circ\varphi_{s,u}$ whenever $0\leq
s\leq u\leq t$;

\item[EF3.] for all $z\in\mathbb{D}$ and  $T>0$ there exists an
integrable function $k_{z,T}:[0,T]\to[0,+\infty)$
such that
\[
|\varphi_{s,u}(z)-\varphi_{s,t}(z)|\leq\int_{u}^{t}k_{z,T}(\xi)\DI\xi
\]
whenever $0\leq s\leq u\leq t\leq T.$
\end{mylist}
\end{definition}
\begin{remark}\label{RM_univalent}
Although it is not required in the above definition, all elements of an evolution family are univalent in~$\UD$, see \cite[Corollary~6.3]{BCM1}.
\end{remark}

In~\cite[Theorem\,1.1]{BCM1} it is proved that for any Herglotz vector field~$G$ the solution~$(\varphi_{s,t})$ to~\eqref{EQ_general-LK-INI} is an evolution family and, conversely, any evolution family~$(\varphi_{s,t})$ satisfies~\eqref{EQ_general-LK-INI} with some Herglotz vector field~$G$. This correspondence is one-to-one if we identify Herglotz vector fields $G(\cdot,t)$ that coincide for a.e.~$t\ge0$.

\begin{definition}\label{DF_associated}
In the above notation, the Herglotz vector field~$G$ and the corresponding evolution family~$(\varphi_{s,t})$ are said to be \textsl{associated} with each other.
\end{definition}

In the next section we will introduce the semigroups~$\mathfrak S$ which we are going to study in this paper. Further, using the above 1-to-1 correspondence, we will reformulate the problem of Loewner-type representation for these semigroups as a \textsl{problem of embeddability} in evolution families.

Let us remark that condition LPR2 in Definition~\ref{DF_Loewner-type_representation} does not follow from~LPR1. Indeed, consider the semigroup~$\mathfrak S:=\UF[\{1\}]$ of all univalent self-maps ${\varphi\in\Hol(\UD,\UD)}$ having a BRFP at~$\sigma=1$.
By \cite[Theorem~1]{CDP2}, an infinitesimal generator~$G$ belongs to $\mathrm{T}\mathfrak{U}[\{1\}]$ if and only if $G$ has a boundary regular null-point at~$\sigma:=1$, \textit{i.e.} $G'(1):=\anglim_{z\to1}G(z)/(z-1)$ exists finitely. At the same time, by~\cite[Theorem~1.1]{BRFPLoewTheory},  a Herglotz vector field $G$ generates an evolution family that lies in $\UF[\{1\}]$ if and only if the following two conditions are met: (a)~$G(\cdot,t)$ has a boundary regular null-point at~$\sigma:=1$ for a.e.~$t\ge0$, and (b)~the function $t\mapsto G'(1,t)$ is locally integrable on~$[0,+\infty)$. Examples given in~\cite[Sect.\,6]{BRFPLoewTheory} show that there are Herglotz vector fields~$G$ satisfying (a) but not (b). Thus, for the convex cone of all Herglotz vector fields satisfying~(a) meets condition~LPR1 but fails to meet condition~LPR2.

It seems plausible to conjecture that LPR2 implies~LPR1, but this interesting question goes slightly out of the framework of the present paper.

We conclude this section with one standard (but not so well-known in this form) result from the classical Loewner Theory, see, \textit{e.g.}, \cite[Problem~3 on p.\,164]{Pommerenke} or \cite[pp.\,69-70]{Aleksandrov}.
\begin{result}\label{TH_standRepr}
Let $\psi\in\Hol(\UD,\UD)\setminus\{\id_\UD\}$ be a univalent function with $\psi(0)=0$ and $\psi'(0)>0$. Then there exists an evolution family~$(\varphi_{s,t})$ such that $\varphi_{0,1}=\psi$ and $\varphi_{s,t}(0)=0$, $\varphi'_{s,t}(0)=\exp\big((t-s)\log\psi'(0)\big)$ whenever $t\ge s\ge0$.
\end{result}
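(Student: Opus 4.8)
The plan is to deduce the statement directly from the classical Loewner parametric representation of $\Un$ recalled in this section, followed by a linear rescaling of the time parameter. To begin, I would note that $\psi\in\Un$ by hypothesis, and that since $\psi\neq\id_\UD$ the Schwarz lemma (in its equality form) forces $0<\psi'(0)<1$; indeed $\psi'(0)=1$ would make $\psi$ a rotation, and together with $\psi(0)=0$, $\psi'(0)>0$ this would give $\psi=\id_\UD$, which is excluded. Hence the number $T:=-\log\psi'(0)$ is strictly positive, and the target derivative normalization $\exp\big((t-s)\log\psi'(0)\big)$ is exactly $e^{-T(t-s)}$.

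Next I would invoke the reachable-set description of $\Un$: since the union of all semiflows of the Loewner\,--\,Kufarev ODE~\eqref{EQ_LK-ODE-intro} coincides with $\Un$, there is a Herglotz vector field of Berkson\,--\,Porta form $G(z,t)=-z\,p(z,t)$, with $p(0,\cdot)\equiv1$ and $\Re p\ge0$, whose associated evolution family $(\varphi_{s,t})$ (existing and unique by~\cite[Theorem~1.1]{BCM1}) satisfies $\varphi_{0,S}=\psi$ for some $S\ge0$. Because $G(0,t)=0$, the origin is a common fixed point, so $\varphi_{s,t}(0)=0$ for all $t\ge s$; differentiating~\eqref{EQ_general-LK-INI} with respect to $z$ at $z=0$ and using $p(0,t)=1$ yields $\DDT\varphi_{s,t}'(0)=-\varphi_{s,t}'(0)$, whence $\varphi_{s,t}'(0)=e^{-(t-s)}$. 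Comparing $\varphi_{0,S}'(0)=e^{-S}$ with $\psi'(0)$ then forces $S=-\log\psi'(0)=T$.

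Finally I would perform the time rescaling: set $\tilde\varphi_{s,t}:=\varphi_{Ts,\,Tt}$ for $0\le s\le t$. Conditions EF1 and EF2 are immediate from the corresponding properties of $(\varphi_{s,t})$, while EF3 follows by the change of variables $\xi=T\eta$ in the defining integral inequality for $(\varphi_{s,t})$, producing the integrable majorant $\tilde k_{z,T'}(\eta):=T\,k_{z,TT'}(T\eta)$ on $[0,T']$. Thus $(\tilde\varphi_{s,t})$ is an evolution family with $\tilde\varphi_{0,1}=\varphi_{0,T}=\psi$, with $\tilde\varphi_{s,t}(0)=\varphi_{Ts,Tt}(0)=0$, and with $\tilde\varphi_{s,t}'(0)=e^{-(Tt-Ts)}=\exp\big((t-s)\log\psi'(0)\big)$, exactly as required.

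The argument is essentially bookkeeping once the reachable-set characterization of $\Un$ is granted, so there is no substantial conceptual obstacle. The two points that genuinely need care are, first, verifying that the time-rescaled family still satisfies the local integrability condition EF3 (handled by the change-of-variables estimate above), and second, pinning down that the rescaling factor must be precisely $T=-\log\psi'(0)$, which is what makes the prescribed derivative normalization at the origin come out exactly, rather than merely up to a constant.
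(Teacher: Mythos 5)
Your argument is correct. Note that the paper itself offers no proof of this statement: it is quoted as a classical result (Theorem~\ref{TH_standRepr}) with references to \cite[Problem~3 on p.\,164]{Pommerenke} and \cite[pp.\,69--70]{Aleksandrov}, so there is nothing to compare against except the standard derivation, which is exactly what you give --- deduce from the reachable-set form of the parametric representation of~$\Un$ that $\psi=\varphi_{0,T}$ with $T=-\log\psi'(0)>0$ (the normalization $p(0,\cdot)\equiv1$ forcing $\varphi_{s,t}'(0)=e^{-(t-s)}$), and then rescale time by the factor~$T$. The only point worth flagging is cosmetic: the reachable-set statement a priori gives $\psi=\varphi_{s_0,t_0}$ for some $0\le s_0\le t_0$ rather than $s_0=0$, but a shift of the time variable in the Herglotz vector field reduces this to your setting, and your verification of EF3 under the rescaling (equivalently, one may observe that $(z,t)\mapsto T\,G(z,Tt)$ is again a Herglotz vector field) is sound.
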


\section{Statement of the problem and its reformulation via evolution families}\label{S_PrStatement}
 Given a set $F\subset\UC$ we denote by $\mathfrak P[F]$ the class of all self-maps $\varphi\in\Hol(\UD,\UD)$ for which every $\sigma\in F$ is a BRFP. Furthermore, for $\tau\in\overline\UD\setminus F$  we denote by $\mathfrak P_\tau[F]$ the subclass of~$\mathfrak P[F]$ that consists of the identity map~$\id_\UD$ and all $\varphi\in \mathfrak P[F]\setminus\{\id_\UD\}$ whose DW-point coincides with~$\tau$.
Finally, by $\UF[F]$ and $\Ut\tau[F]$ we denote the classes formed by all univalent self-maps from $\mathfrak P[F]$ and $\mathfrak P_\tau[F]$, respectively. Note that according to the Chain Rule for angular derivatives (see, \textit{e.g.}, \cite[Lemma~(1.3.25) on p.\,92]{Abate}) these classes are semigroups with identity w.r.t. the operation of composition.

Note that given a self-map $\phi\in\Hol(\UD,\UD)\setminus\{\id_\UD\}$, the set of all BRFPs of~$\phi$ is at most countable. This easily follows from Cowen\,--\,Pommerenke inequalities~\cite[Theorem~4.1]{CowenPommerenke} combined with the fact that $\phi'(\sigma)>1$ for any BRFP~$\sigma$ different from the Denjoy\,--\,Wolff point of~$\phi$.

 Therefore, it does not make sense to consider $\UF[F]$ if $F$ is uncountable. On the other hand, the following example shows that for any \textsl{finite} set~$F\subset\partial\UD$ and any~$\tau\in\overline\UD\setminus F$, the semigroups $\UF[F]$ and $\Ut\tau[F]$ are not trivial.
\begin{example}\label{EX_non-trivial}
Given $n\ge1$ pairwise distinct points $\sigma_1,\ldots,\sigma_n\in\partial\UD$ and $\tau\in\overline\UD\setminus F$, $F:=\{\sigma_1,\ldots,\sigma_n\}$, let us construct a self-map $\phi\neq\id_\UD$ that belongs to~$\Ut\tau[F]$. To this end, consider the rational function
$$
G(z):=(\tau-z)(1-\overline\tau z)/p(z),\quad p(z):=\sum_{j=1}^n\alpha_j\frac{\sigma_j+z}{\sigma_j-z},
$$
where $\alpha_1,\ldots,\alpha_n$ are arbitrary positive coefficients. Since $p$, and hence $1/p$, have positive real part, $G$ is an infinitesimal generator by the Berkson\,--\,Porta formula~\eqref{EQ_BP}. Clearly, the points~$\sigma_1,\ldots,\sigma_n$ are zeros of~$G$. Therefore, by~\cite[Theorem~1]{CDP2}, the one-parameter semigroup~$(\phi_t)$ generated by~$G$ via~\eqref{EQ_autonomousFlow} lies in~$\Ut\tau[F]\subset \UF[F]$ and $G\in \mathrm{T}\Ut\tau[F]\subset  \mathrm{T}\UF[F]$. Note that $\phi_t\neq\id_\UD$ for any~$t>0$ because $G\not\equiv0$.
\end{example}

The main problem we address in this paper is the following.

\vskip1ex
\noindent{\bf PROBLEM.} {\it
Given a finite set $F\subset\partial\UD$ and $\tau\in\overline\UD\setminus F$, do the semigroups $\UF[F]$ and $\Ut\tau[F]$  admit Loewner-type parametric representation in the sense of Definition~\ref{DF_Loewner-type_representation}?}
\vskip1ex

We are able to give an affirmative answer for a part of the cases, see Theorem~\ref{TH_main} in the Introduction.
The first step is to reduce the above problem to a problem of embeddability in evolution families.
\begin{theorem}\label{TH_embedd}
Let $\mathfrak S:=\UF[F]$ or $\mathfrak S:=\Ut\tau[F]$ with some finite set~$F\subset\partial\UD$ and $\tau\in\overline\UD\setminus F$. Then the following conditions are equivalent:
\begin{itemize}
\item[(i)] $\mathfrak S$ admits Loewner-type parametric representation;
\item[(ii)] for any $\varphi\in\mathfrak S\setminus\{\id_\UD\}$ there exists an evolution family $(\psi_{s,t})\subset\mathfrak S$ such that $\varphi=\psi_{0,1}$.
\end{itemize}

\begin{proof}[Proof of Theorem~\ref{TH_embedd}]
Note that (i) implies (ii) for any semigroup~$\mathfrak S$. Indeed, assume that~(i) holds. Then by Definition~\ref{DF_Loewner-type_representation}, for any $\varphi\in\mathfrak S\setminus\{\id_\UD\}$ there exists a Herglotz vector field $G$ with associated evolution family~$(\varphi_{s,t})\subset\mathfrak S$ such that $\varphi=\varphi_{s_0,t_0}$ for some $s_0\ge0$ and~$t_0>s_0$. Setting ${\psi_{s,t}:=\varphi_{s_0+qs,s_0+qt}}$, $q:=t_0-s_0$, for all~${s\ge0}$ and all~$t\ge s$ proves~(ii).

Now we assume that~(ii) holds. Consider first the case ${\mathfrak S=\UF[F]}$. To show that (i) holds, define $\mathcal M_{\mathfrak S}$ to be the set of all Herglotz vector fields $G$ with the following properties:
\begin{itemize}
\item[(a)] for each~$\sigma\in F$ and a.e.~$t\ge0$ there exists a finite angular limit
$
\lambda_\sigma(t):=\anglim_{z\to\sigma}G(z,t)/(z-\sigma);
$
\item[(b)] the functions $\lambda_\sigma$, $\sigma\in F$, are locally integrable on~$[0,+\infty)$.
\end{itemize}
Then thanks to the fact that the set of all infinitesimal generators is a convex cone, see, \textit{e.g.}, \cite[Theorem\,1.4.15]{Abate}, the set $\mathcal M_{\mathfrak S}$ is also a convex cone.
Moreover, according to \cite[Theorem\,1]{CDP2} and \cite[Theorem\,1.1]{BRFPLoewTheory}, $\mathcal M_{\mathfrak S}$ satisfies conditions LPR1 and~LPR2 in Definition~\ref{DF_Loewner-type_representation}.

Now let $\varphi\in\UF[F]$. Then by~(ii) there exists an evolution family ${(\psi_{s,t})\subset \UF[F]}$ such that $\psi_{0,1}=\varphi$. By \cite[Theorem\,1.1]{BRFPLoewTheory}, the vector field~$G$ associated with~$(\psi_{s,t})$ satisfies conditions (a) and~(b). This proves~LPR3.

It remains to explain how the proof of (ii)~$\Rightarrow$~(i) should be modified in the case~${\mathfrak S=\Ut\tau[F]}$. First of all, to conditions (a) and~(b) we should add condition
\begin{itemize}
\item[(c)] $G(z,t)=(\tau-z)(1-\overline\tau z)p(z,t)$ for all~$z\in\UD$, a.e.~$t\ge0$ and some~$p:\UD\times[0,+\infty)\to\C$ with~$\Re p\ge0$.
\end{itemize}
To make sure that LPR1 and LPR2 hold we should additionally use \cite[Theorem\,1.4.19]{Abate} and \cite[Corollary\,7.2]{BCM1}, respectively. Finally, to see that the vector field~$G$ associated with~$(\psi_{s,t})$ satisfies condition~(c), one should additionally use \cite[Theorem~6.7]{BCM1}.

The proof is now complete.
\end{proof}

\end{theorem}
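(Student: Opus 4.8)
The statement is an equivalence, and the implication (i)~$\Rightarrow$~(ii) holds for an arbitrary subsemigroup, so I would dispose of it first. The plan is simply to unwind Definition~\ref{DF_Loewner-type_representation}: condition~LPR3 supplies, for each $\varphi\in\mathfrak S\setminus\{\id_\UD\}$, a Herglotz vector field $G\in\mathcal M_{\mathfrak S}$ whose associated evolution family $(\varphi_{s,t})$ realizes $\varphi=\varphi_{s_0,t_0}$ for some $t_0>s_0\ge0$, and by~LPR2 this family lies in $\mathfrak S$. The only thing left to arrange is the normalization $\varphi=\psi_{0,1}$, which I would obtain by the affine time change $\psi_{s,t}:=\varphi_{s_0+qs,\,s_0+qt}$ with $q:=t_0-s_0$. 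A direct check that EF1--EF3 survive this rescaling (EF1 is trivial, EF2 is immediate from the cocycle identity $\varphi_{s,t}=\varphi_{u,t}\circ\varphi_{s,u}$, and EF3 only rescales the majorant) shows $(\psi_{s,t})$ is again an evolution family contained in $\mathfrak S$ with $\psi_{0,1}=\varphi$, which is~(ii).

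For the converse (ii)~$\Rightarrow$~(i) the heart of the matter is to exhibit the cone $\mathcal M_{\mathfrak S}$. For $\mathfrak S=\UF[F]$ I would take $\mathcal M_{\mathfrak S}$ to consist of those Herglotz vector fields $G$ for which, at each $\sigma\in F$, the finite angular limit $\lambda_\sigma(t):=\anglim_{z\to\sigma}G(z,t)/(z-\sigma)$ exists for a.e.~$t$ and $t\mapsto\lambda_\sigma(t)$ is locally integrable on $[0,+\infty)$. Convexity is inherited because the infinitesimal generators already form a convex cone and both the existence of the boundary angular limit and its local integrability pass to nonnegative linear combinations. Then LPR1 is precisely the pointwise characterization of $\mathrm T\UF[F]$ by boundary regular null-points, while LPR2 and LPR3 are exactly the two halves of the characterization of evolution families contained in $\UF[F]$: LPR2 says that imposing these boundary conditions on the associated vector field forces $(\varphi_{s,t})\subset\UF[F]$, and LPR3 says that the vector field associated (Definition~\ref{DF_associated}) with the family furnished by~(ii) satisfies these same conditions and hence belongs to $\mathcal M_{\mathfrak S}$. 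Since that family already gives $\psi_{0,1}=\varphi$, this closes LPR3 with $s=0$, $t=1$.

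The case $\mathfrak S=\Ut\tau[F]$ runs on the same scheme once I append to the defining conditions of $\mathcal M_{\mathfrak S}$ the Berkson--Porta constraint that the driving data be centred at $\tau$, namely $G(z,t)=(\tau-z)(1-\overline\tau z)p(z,t)$ with $\Re p\ge0$; verifying LPR1 and LPR2 then additionally requires that fixing the Denjoy--Wolff point at $\tau$ be compatible with the generator and with the evolution-family structure, and LPR3 requires checking that the family from~(ii) has associated data of this centred form. I expect the genuine obstacle to lie in LPR2 together with LPR3 rather than in LPR1: as the discussion of $\UF[\{1\}]$ immediately preceding the theorem makes clear, the pointwise boundary condition~(a) alone does \emph{not} keep the generated evolution family inside $\mathfrak S$, so the local-integrability hypothesis~(b) is indispensable. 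The whole argument therefore hinges on invoking the correct characterization of evolution families with prescribed boundary regular fixed points, not merely the pointwise description of the infinitesimal structure, and the delicate point is to confirm that the vector field produced by~(ii) genuinely inherits property~(b) (and, in the $\Ut\tau[F]$ case, the centred Berkson--Porta form).
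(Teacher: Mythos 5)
Your proposal is correct and follows essentially the same route as the paper: the same affine time change for (i)\,$\Rightarrow$\,(ii), the same convex cone of Herglotz vector fields defined by existence of the finite angular limits $\lambda_\sigma$ together with their local integrability, the same appeal to the characterizations of $\mathrm T\UF[F]$ and of evolution families with prescribed BRFPs for LPR1--LPR3, and the same added Berkson--Porta condition centred at $\tau$ for the $\Ut\tau[F]$ case. You also correctly identify that the local-integrability condition is the indispensable ingredient distinguishing LPR2 from LPR1, exactly as the paper's preceding discussion indicates.
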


\section{Evolution families with boundary regular fixed points}\label{S_diff_theorems}
In this section we study conditions under which a family $(\varphi_{s,t})_{t\ge s\ge 0}\subset\mathfrak S$, where ${\mathfrak S:=\mathfrak P[F]}$ or ${\mathfrak S:=\mathfrak P_\tau[F]}$, satisfying conditions EF1 and~EF2 from Definition~\ref{DF_evol_family}, is an \textsl{evolution family}.

\begin{remark}\label{RM_Chain-Rule}
In the above notation, by the Chain Rule for angular derivatives, see, \textit{e.g.}, \cite[Lemma~(1.3.25) on p.\,92]{Abate} or \cite[Lemma~2]{CDP2}, for any $\sigma\in F$ (and also for $\sigma:=\tau$ in case $\mathfrak S=\mathfrak P_\tau[F]$) we have $\varphi_{s,t}'(\sigma)=\varphi_{s,u}'(\sigma)\varphi'_{u,t}(\sigma)$ whenever $t\ge u\ge s\ge0$.
\end{remark}

\begin{theorem}\label{TH_differentiability}
Let $\tau\in\overline\UD$, $\sigma\in\UC\setminus\{\tau\}$ and let $(\varphi_{s,t})_{t\ge s\ge 0}$ be a family in $\mathfrak P_\tau[\{\sigma\}]$ satisfying conditions EF1 and EF2 from Definition~\ref{DF_evol_family}. Then the following two assertions are equivalent:
\begin{mylist}
\item[(i)] $(\varphi_{s,t})$ is an evolution family;
\item[(ii)] the function $[0,+\infty)\ni t\mapsto \varphi_{0,t}'(\sigma)\in(0,+\infty)$ is locally absolutely continuous.
\end{mylist}
\end{theorem}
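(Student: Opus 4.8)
The plan is to prove the two implications separately, using throughout the multiplicative cocycle generated by the boundary derivative at~$\sigma$. Writing $a(t):=\log\varphi_{0,t}'(\sigma)$, the Chain Rule for angular derivatives (Remark~\ref{RM_Chain-Rule}) together with EF1 gives $\varphi_{s,t}'(\sigma)=\varphi_{0,t}'(\sigma)/\varphi_{0,s}'(\sigma)=e^{a(t)-a(s)}$ for all $t\ge s\ge0$; moreover, since $\sigma$ is a BRFP distinct from the Denjoy\,--\,Wolff point~$\tau$, Remark~\ref{RM_CowenPomm} (or the Denjoy\,--\,Wolff Theorem) yields $\varphi_{s,t}'(\sigma)\ge1$, so that $a$ is non-decreasing with $a(0)=0$. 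Thus assertion~(ii) is precisely the statement that the non-decreasing function $a$ is locally absolutely continuous.

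For the implication (i)$\Rightarrow$(ii) I would invoke the correspondence between evolution families and Herglotz vector fields. If $(\varphi_{s,t})$ is an evolution family, then by \cite[Theorem~1.1]{BCM1} it has an associated Herglotz vector field~$G$; since $(\varphi_{s,t})\subset\mathfrak P[\{\sigma\}]$, \cite[Theorem~1.1]{BRFPLoewTheory} applies and shows that $\lambda_\sigma(t):=G'(\sigma,t)$ is locally integrable and that $\varphi_{0,t}'(\sigma)=\exp\big(\int_0^t\lambda_\sigma(\xi)\DI\xi\big)$. Hence $a(t)=\int_0^t\lambda_\sigma$ is locally absolutely continuous. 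This is the expected use of \cite{BRFPLoewTheory}: as the examples in its Section~6 (mentioned in Sect.~\ref{SS_param-represent}) show, mere a.e.\ existence of $\lambda_\sigma(t)$ does not suffice, so the genuine input here is the local integrability that holds for evolution families.

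The substance is the converse (ii)$\Rightarrow$(i), where one must recover the regularity condition EF3 from the regularity of the single boundary datum~$a$. The key is a displacement estimate: \emph{for every compact $K\subset\UD$ and every $M>0$ there is $C=C(K,M,\sigma,\tau)$ such that $|\psi(z)-z|\le C\big(1-1/\psi'(\sigma)\big)$ for all $z\in K$ and all $\psi\in\mathfrak P_\tau[\{\sigma\}]$ with $\psi'(\sigma)\le M$.} To prove it I would pass to the right half-plane~$\UH$ by a Cayley map sending $\sigma$ to~$\infty$; the conjugated map $f$ then fixes~$\infty$ with $f'(\infty)=1/\psi'(\sigma)=:\beta\le1$, and Remark~\ref{RM_JWC} gives $\Re f(w)\ge\beta\,\Re w$, so that $h(w):=f(w)-\beta w$ has non-negative real part. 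The image $p$ of~$\tau$ is a second fixed point of~$f$, giving the normalization $h(p)=(1-\beta)p$; when $\tau\in\UD$ (so $p\in\UH$ is interior) Harnack's inequality bounds $|h|$ on compacta by $C(1-\beta)$, whence $|f(w)-w|\le|h(w)|+(1-\beta)|w|\le C'(1-\beta)$ on compacta, and transporting back to~$\UD$ gives the claim. Applying this with $\psi=\varphi_{u,t}$, using EF2 to write $\varphi_{s,t}(z)-\varphi_{s,u}(z)=\varphi_{u,t}(w')-w'$ with $w'=\varphi_{s,u}(z)$ (which stays in a compact set depending only on $z$ and~$T$, by the half-plane estimates and $\beta\ge\beta_{\min}:=1/\varphi_{0,T}'(\sigma)>0$), together with the elementary bound $1-e^{-(a(t)-a(u))}\le a(t)-a(u)$, one obtains $|\varphi_{s,t}(z)-\varphi_{s,u}(z)|\le C\,(a(t)-a(u))=\int_u^t C\,a'(\xi)\DI\xi$, i.e.\ EF3 with $k_{z,T}=C\,a'\in L^1[0,T]$.

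The main obstacle is the displacement estimate when $\tau\in\UC$, i.e.\ when the second fixed point is the finite boundary point $q\in\partial\UH$ (the image of~$\tau$) rather than an interior point, so that the interior Harnack argument is unavailable. There the normalization $h(q)=(1-\beta)q$ is only a boundary (angular) condition and, on its own, does not control~$h$ on interior compacta. I expect to handle this by combining the two boundary normalizations---the behaviour $h(w)/w\to0$ at~$\infty$ and the value at~$q$---through the Herglotz\,--\,Nevanlinna representation of~$h$ (equivalently, a boundary Harnack estimate anchored at~$q$), which should force the representing measure and additive constant of~$h$ to be $O(1-\beta)$ and hence again give $|h|\le C(1-\beta)$ on compacta; the Cowen\,--\,Pommerenke relation $\psi'(\sigma)\,\psi'(\tau)\ge1$ (Remark~\ref{RM_CowenPomm}) then keeps the constants uniform. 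Verifying this boundary case, and checking the uniform compactness of the intermediate points $w'=\varphi_{s,u}(z)$, are the two points that require real care.
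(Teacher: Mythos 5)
Your overall strategy matches the paper's: reduce EF3 to a uniform displacement estimate of the form $|\psi(\zeta)-\zeta|\le C\,\bigl(1-1/\psi'(\sigma)\bigr)$ on compacta of $\UD$, obtained by conjugating to a half-plane with $\sigma\mapsto\infty$ and exploiting Julia's inequality, and then integrate the increments of $a(t)=\log\varphi_{0,t}'(\sigma)$. Your implication (i)$\Rightarrow$(ii) via \cite[Theorem~1.1]{BRFPLoewTheory} is exactly the paper's. For the interior case $\tau\in\UD$ the paper takes a softer route (normalize $\tau=0$, deduce local absolute continuity of $t\mapsto\varphi_{0,t}'(0)$ from Theorem~\ref{TH_JWC} applied to $\varphi_{u,v}(z)/z$, and cite \cite[Theorem~7.3]{BCM1}), but your direct Harnack argument is sound, since $h(p)=(1-\beta)p$ controls both $\Re h(p)$ and $\Im h(p)$ and hence $|h|$ on compacta.

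The one genuine gap is the one you flag yourself: the displacement estimate when $\tau\in\UC$. Your instinct is right that the angular value $h(q)=(1-\beta)q$ is insufficient --- it is a single real constraint and cannot bound the representing measure (one can balance large masses against each other in the Nevanlinna integral while keeping $h(q)$ fixed, and the additive constant is free). The missing ingredient is the \emph{angular derivative} of $h$ at $q$: since $h=f-\beta\,\id$ and $q$ is the Denjoy--Wolff point, $h'(q)=f'(q)-\beta\in[0,1-\beta]$ (upper bound from $f'(q)\le1$, lower bound from Remark~\ref{RM_CowenPomm}), and in the Nevanlinna representation the angular derivative at a finite boundary point dominates the total mass of the representing measure up to a constant depending only on that point; this yields $\mu(\R)=O(1-\beta)$, pins down the additive constant via $h(q)$, and gives $|h|=O(1-\beta)$ on compacta. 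So your route can be completed. The paper's own device, in Proposition~\ref{PR_differentiability_2points}, avoids the representation formula altogether: with $H(\sigma)=\infty$ and $H(\tau)=0$ it forms $g=H\circ\varphi_{u,t}-H/\varphi'_{u,t}(\sigma)$, observes that $w\mapsto 1/g\bigl(H^{-1}(1/w)\bigr)$ is again a self-map of $\UH$ fixing $\infty$ with angular derivative $\varphi'_{u,t}(\sigma)/\bigl(\varphi'_{u,t}(\sigma)\varphi'_{u,t}(\tau)-1\bigr)$, and applies Julia at $\infty$ a second time to get a lower bound on $|1/g|$, hence the upper bound on $|g|$. Two smaller points to tighten: the estimate you quote is really in terms of both $|1-1/\varphi'_{u,t}(\sigma)|$ and $|1-\varphi'_{u,t}(\tau)|$, and one must first transfer local absolute continuity from $t\mapsto\varphi'_{0,t}(\sigma)$ to $t\mapsto\varphi'_{0,t}(\tau)$ using the squeeze $1/\varphi'_{s,t}(\sigma)\le\varphi'_{s,t}(\tau)\le1$; and the compact containment of $K(z)=\{\varphi_{s,u}(z)\}$ genuinely needs Julia's inequality at \emph{both} fixed points, since the inequality at $\sigma$ alone only confines the orbit to a horodisk whose closure touches $\UC$ at $\sigma$.
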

\begin{proof}
The fact that (i) implies~(ii) follows readily from \cite[Theorem~1.1]{BRFPLoewTheory}.
The converse implication for $\tau\in\UD$ is proved in~\cite{GoryainovDiff}, but we sketch here a bit different version of that proof. First of all, in case $\tau\in\UD$, using automorphisms of~$\UD$ we may assume that~$\tau=0$. Then applying Theorem~\ref{TH_JWC} for $\varphi(z):=\varphi_{u,v}(z)/z$ with $v\ge u\ge0$ and using Remark~\ref{RM_Chain-Rule}, it is easy to see that (ii) implies that $[0,+\infty)\ni t\mapsto\varphi_{0,t}'(0)\in\Complex$ is also locally absolutely continuous and that $\varphi_{0,t}'(0)\neq0$ for all~$t\ge0$. Therefore, (i) follows from \cite[Theorem~7.3]{BCM1} (or, equivalently, from \cite[Proposition~2.10]{SMP}).

Now assume that (ii) holds and that $\tau\in\UC$. Using the fact that $\varphi'_{s,t}(\tau)\le 1$ but $\varphi'_{s,t}(\tau)\varphi'_{s,t}(\sigma)\ge1$ for all $s\ge0$ and $t\ge s$ by Remark~\ref{RM_CowenPomm}, and bearing in mind Remark~\ref{RM_Chain-Rule}, we see that $[0,+\infty)\ni t\mapsto \varphi_{0,t}'(\tau)\in(0,+\infty)$ is locally absolutely continuous. The rest of the proof is reduced to the proposition below.
\end{proof}

\begin{proposition}\label{PR_differentiability_2points}
Let $\sigma_1,\sigma_2$ be two distinct points of~$\UC$ and let $(\varphi_{s,t})_{t\ge s\ge 0}$ be a family in~$\mathfrak P[\{\sigma_1,\sigma_2\}]$ satisfying  conditions EF1 and EF2 from Definition~\ref{DF_evol_family}. Then the following two assertions are equivalent:
\begin{mylist}
\item[(i)] $(\varphi_{s,t})$ is an evolution family;
\item[(ii)] the functions $[0,+\infty)\ni t\mapsto \varphi_{0,t}'(\sigma_j)\in(0,+\infty)$, $j=1,2$, are locally absolutely continuous.
\end{mylist}
\end{proposition}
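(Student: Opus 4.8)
The two implications call for different tools. The implication (i)$\Rightarrow$(ii) is the easy one and parallels the first half of Theorem~\ref{TH_differentiability}: an evolution family contained in $\mathfrak P[\{\sigma_1,\sigma_2\}]$ consists of univalent maps (Remark~\ref{RM_univalent}), hence lies in $\UF[\{\sigma_1,\sigma_2\}]$, so by \cite[Theorem~1.1]{BRFPLoewTheory} its associated Herglotz vector field has a boundary regular null-point at each~$\sigma_j$ with locally integrable dilation $\lambda_{\sigma_j}$, and $\varphi_{0,t}'(\sigma_j)=\exp\int_0^t\lambda_{\sigma_j}(\xi)\DI\xi$; both functions $t\mapsto\varphi_{0,t}'(\sigma_j)$ are therefore locally absolutely continuous. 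The substance is the converse (ii)$\Rightarrow$(i). Since EF1 and EF2 are assumed, it suffices to verify condition~EF3 of Definition~\ref{DF_evol_family}. I would set $a_j(t):=\log\varphi_{0,t}'(\sigma_j)$, which is locally absolutely continuous by~(ii), and record via the Chain Rule (Remark~\ref{RM_Chain-Rule}) that $\log\varphi_{u,t}'(\sigma_j)=a_j(t)-a_j(u)$ for $t\ge u\ge0$.

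Fix $z\in\UD$ and $T>0$. The first step is to confine the orbit. Applying Julia's inequality~\eqref{EQ_JuliaLemma} at $\sigma_1$ and at $\sigma_2$ to the map $\varphi_{s,u}$ gives $h_j(\varphi_{s,u}(z))\le\varphi_{s,u}'(\sigma_j)\,h_j(z)$, where $h_j(w):=|\sigma_j-w|^2/(1-|w|^2)$. The dilations $\varphi_{s,u}'(\sigma_j)=\exp(a_j(u)-a_j(s))$ stay bounded for $0\le s\le u\le T$ and $h_j(z)$ is fixed, so $\varphi_{s,u}(z)$ lies in the intersection of two fixed sublevel sets of $h_1$ and $h_2$, i.e. of two closed horodisks. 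Since $\sigma_1\ne\sigma_2$, the closures of these horodisks meet $\UC$ only at $\sigma_1$ and $\sigma_2$ respectively, so their intersection is a \emph{compact} subset $K=K_{z,T}$ of $\UD$, and $\{\varphi_{s,u}(z):0\le s\le u\le T\}\subset K$.

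The second, crucial, step is a displacement estimate on compacts: there should be $C=C(K)$ with $|f(w)-w|\le C\big(|\log\mu_1|+|\log\mu_2|\big)$ for all $w\in K$ and all $f\in\mathfrak P[\{\sigma_1,\sigma_2\}]$ whose dilations $\mu_j:=f'(\sigma_j)$ lie in a fixed compact subset of $(0,+\infty)$. To prove it I would conjugate by a \Mobius map $M$ taking $\UD$ onto the right half-plane $\UH$ with $\sigma_1\mapsto\infty$, $\sigma_2\mapsto0$, and put $\tilde f:=M\circ f\circ M^{-1}$, a self-map of $\UH$ fixing $0$ and $\infty$ with $a:=\tilde f'(\infty)$, $b:=\tilde f'(0)$ finite and $b\ge a$ by Remark~\ref{RM_CowenPomm}. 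The half-plane Julia inequality (Remark~\ref{RM_JWC}) gives $\Re\tilde f(\zeta)\ge a\,\Re\zeta$, so $A(\zeta):=\tilde f(\zeta)-a\zeta$ has nonnegative real part, vanishes at $0$, and has $A'(0)=b-a$. The point is that $A$ has a Herglotz representation by a positive measure whose total mass, together with the normalizing imaginary constant, is controlled by $A'(0)$ (the latter through a Cauchy--Schwarz estimate), yielding $|A(\zeta)|\le C'(\zeta)(b-a)$ on compacts and hence $|\tilde f(\zeta)-\zeta|\le|a-1||\zeta|+C'(\zeta)(b-a)$; transporting back through $M$, bi-Lipschitz on compacts, gives the claim.

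With both steps available, EF3 follows: for $0\le s\le u\le t\le T$ the cocycle law EF2 gives $\varphi_{s,t}(z)-\varphi_{s,u}(z)=\varphi_{u,t}(w)-w$ with $w:=\varphi_{s,u}(z)\in K$, so
\[
|\varphi_{s,t}(z)-\varphi_{s,u}(z)|\le C\big(|a_1(t)-a_1(u)|+|a_2(t)-a_2(u)|\big)\le C\int_u^t\big(|a_1'(\xi)|+|a_2'(\xi)|\big)\DI\xi ,
\]
which is EF3 with $k_{z,T}:=C(|a_1'|+|a_2'|)$, integrable by~(ii). I expect the displacement estimate to be the main obstacle, and specifically the fact that the dependence on $|\log\mu_1|+|\log\mu_2|$ is \emph{linear}: a direct analysis of the region cut out by the Julia inequalities only bounds the imaginary part by a square root, which is useless here (a bound $\lesssim\sqrt{\int_u^t(\cdots)}$ is not of the form $\int_u^t k$). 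Extracting the linear rate seems to require the Herglotz representation together with the reality of the two dilations, which forces the cancellation needed to control the whole displacement by $A'(0)=b-a$.
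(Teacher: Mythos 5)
Your argument is correct and is essentially the paper's own proof: the same two steps appear in the same roles (confinement of the orbit $\{\varphi_{s,u}(z)\}$ to a compact set via Julia's inequality at both fixed points, then a displacement estimate on compacts that is \emph{linear} in the deviations of the two dilations from $1$, obtained by passing to $\UH$ and subtracting the linear part $a\zeta$ so that the remainder $A=\tilde f-a\zeta$ has nonnegative real part and a regular null point at $0$ with $A'(0)=b-a$, which is exactly the paper's auxiliary function $g_{u,t}$). The only variation is in how the bound $|A(\zeta)|\le C'(\zeta)(b-a)$ is extracted: the paper applies the half-plane Julia inequality to the self-map $w\mapsto 1/A\bigl(H^{-1}(1/w)\bigr)$ of $\UH$, whose angular derivative at $\infty$ equals $1/A'(0)$, whereas you read the same control off the Nevanlinna representation of $A$ (mass of the measure and the additive constant bounded by $A'(0)$ via Cauchy--Schwarz) --- both yield the required linear rate, so the proofs coincide in substance.
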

\begin{proof}
As before, the implication (i) $\Longrightarrow$~(ii) is by \cite[Theorem~1.1]{BRFPLoewTheory}.

So let us assume (ii) and prove~(i).
Let $H$ be any conformal map of $\UD$ onto the right half-plane~$\UH$ with $H(\sigma_1)=\infty$, $H(\sigma_2)=0$.  Fix any $T>0$. For all $u,t\in[0,T]$ with $u\le t$ we define $g_{u,t}(\zeta):=H(\varphi_{u,t}(\zeta))-H(\zeta)/\varphi'_{u,t}(\sigma_1)$. By Remark~\ref{RM_JWC}  applied to $H\circ\varphi_{u,t}\circ H^{-1}$,  either $g_{u,t}\equiv i\,C$ for some constant $C\in\Real$ or $g_{u,t}\in\Hol(\UD,\UH)$ and hence $f_{u,t}(w):=1/g_{u,t}\big( H^{-1}(1/w))$ is a holomorphic self-map of~$\UH$. Since $\sigma_2$ is a BRFP of~$\varphi_{u,t}$, in the former case we get $g_{u,t}\equiv0$, while in the latter case we have  $$f'_{u,t}(\infty)=\frac{\varphi'_{u,t}(\sigma_1)}{\varphi'_{u,t}(\sigma_1)\varphi'_{u,t}(\sigma_2)-1}\in(0,+\infty)$$ and, by~\eqref{EQ_Julia_half-plane} for $f:=f_{u,t}$,
$$
\left|\frac{1}{g_{u,t}(\zeta)}\right|\ge\Re \frac{1}{g_{u,t}(\zeta)}\ge f'_{u,t}(\infty)\,\Re\frac{1}{H(\zeta)}\quad\lefteqn{\text{for all~$\zeta\in\UD$}.}
$$
Therefore, in both cases, for any~$\zeta\in\UD$,
$$
\big|\varphi_{u,t}(\zeta)-\zeta\big|\le 2\big|H(\varphi_{u,t}(\zeta))-H(\zeta)\big|\le 2\Big(\big|g_{u,t}(\zeta)\big| + \left|H(\zeta)\Big(1-\frac1{\varphi'_{u,t}(\sigma_1)}\Big)\right|\Big).
$$

In view of Remark~\ref{RM_Chain-Rule}, it follows that for any $K\subset\subset\UD$ there exists $M_K>0$ such that
\begin{equation}\label{EQ_varphi(zeta)-zeta}
|\varphi_{u,t}(\zeta)-\zeta|\le M_K\Big(\big|1-1/\varphi'_{u,t}(\sigma_1)\big|+\big|1-\varphi'_{u,t}(\sigma_2)\big|\Big)
\le M_K\,\sum_{j=1}^{2}\frac{\big|\varphi'_{0,t}(\sigma_j)-\varphi'_{0,u}(\sigma_j)\big|} {\min\limits_{0\le v\le T}\varphi'_{0,v}(\sigma_j)}
\end{equation}
for all $\zeta\in K$ and any $u,t\in[0,T]$ with $u\le t$.

Now fix any $z\in\UD$. By~(ii) and Remark~\ref{RM_Chain-Rule}, $$\sup\{\varphi_{s,u}'(\sigma_j):\,0\le s\le u\le T,\,j=1,2\}<+\infty.$$  Hence, using Theorem~\ref{TH_JWC} for $\varphi_{s,u}$ twice, at~$\sigma_1$ and at~$\sigma_2$, we conclude that $$K(z):=\{\varphi_{s,u}(z):0\le s\le u\le T\}\subset\subset \UD.$$ Applying now~\eqref{EQ_varphi(zeta)-zeta} with $K:=K(z)$ and $\zeta:=\varphi_{s,u}(z)$, where $0\le s\le u\le T$, we get
$$
|\varphi_{s,t}(z)-\varphi_{s,u}(z)|\le \int_u^t k_{z,T}(\xi)\DI\xi,\quad k_{z,T}(\xi):=M_{K(z)}\,\sum_{j=1}^{2} \frac{\big|\frac{\di}{\di\xi}\,\varphi'_{0,\xi}(\sigma_j)\big|} {\min\limits_{0\le v\le T}\varphi'_{0,v}(\sigma_j)},
$$
whenever $0\le s\le u\le t\le T$. This proves condition EF3, as it was desired.
\end{proof}

\begin{remark}
Fix some~$z_0\in\UD$. Using Theorem~\ref{TH_JWC} it is not difficult to show that assertion~(ii) in Proposition~\ref{PR_differentiability_2points} is equivalent to:
\begin{mylist}
\item[(iii)] $[0,+\infty)\ni t\mapsto \varphi'_{0,t}(\sigma_1)\varphi'_{0,t}(\sigma_2)$ and $[0,+\infty)\ni t\mapsto \varphi_{0,t}(z_0)$ are locally absolutely continuous.
\end{mylist}
\end{remark}

It turns out that in case of \textsl{more than two} boundary regular fixed points even the regularity requirement concerning $t\mapsto\varphi_{0,t}(z_0)$  may be omitted.

\begin{proposition}
Let $F:=\{\sigma_1,\sigma_2,\ldots\sigma_n\}\subset \UC$, where $\sigma_j$'s are pairwise distinct points and $n\ge3$. Let  $(\varphi_{s,t})_{t\ge s\ge 0}$ be a family in~$\mathfrak P[F]$ satisfying  conditions EF1 and EF2 from Definition~\ref{DF_evol_family}. Then the following two assertions are equivalent:
\begin{mylist}
\item[(i)] $(\varphi_{s,t})$ is an evolution family;
\item[(ii)] the function $[0,+\infty)\ni t\mapsto \prod_{j=1}^n\varphi_{0,t}'(\sigma_j)\in(0,+\infty)$ is locally absolutely continuous.
\end{mylist}
\end{proposition}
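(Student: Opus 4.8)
The plan is to reduce the statement to Proposition~\ref{PR_differentiability_2points}. The implication (i)$\Rightarrow$(ii) is immediate: by \cite[Theorem~1.1]{BRFPLoewTheory}, if $(\varphi_{s,t})$ is an evolution family then each $t\mapsto\varphi_{0,t}'(\sigma_j)$ is positive and locally absolutely continuous, hence so is their finite product. The whole content is therefore in (ii)$\Rightarrow$(i), and the natural strategy is to \emph{upgrade} the absolute continuity of the product $P(t):=\prod_{j=1}^n\varphi_{0,t}'(\sigma_j)$ to absolute continuity of each individual factor $\alpha_j(t):=\varphi_{0,t}'(\sigma_j)$. Once this is achieved, the family lies in $\mathfrak P[\{\sigma_1,\sigma_2\}]$, satisfies EF1 and EF2, and has both $t\mapsto\varphi_{0,t}'(\sigma_1)$ and $t\mapsto\varphi_{0,t}'(\sigma_2)$ locally absolutely continuous, so Proposition~\ref{PR_differentiability_2points} yields at once that $(\varphi_{s,t})$ is an evolution family.

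The key step is a rigidity estimate that makes essential use of the hypothesis $n\ge3$. Fix $T>0$ and, for $0\le s\le t\le T$, write $\beta_j:=\varphi_{s,t}'(\sigma_j)>0$, so that $\prod_j\beta_j=P(t)/P(s)=:Q$ by Remark~\ref{RM_Chain-Rule} and $P(0)=1$ by EF1. Since each $\varphi_{s,t}$ has boundary regular fixed points at all the $\sigma_j$, Remark~\ref{RM_CowenPomm} gives $\beta_i\beta_j\ge1$ for every pair $i\ne j$; in particular at most one of the $\beta_j$ can be smaller than $1$. I would then argue that for $n\ge3$ the single number $Q$ controls every factor: if all $\beta_j\ge1$ then trivially $\beta_j\le Q$; and if some $\beta_k<1$, then $\beta_j\ge1/\beta_k$ for all $j\ne k$ forces $Q\ge\beta_k^{-(n-2)}$, whence $\beta_k\ge Q^{-1/(n-2)}$ and $\beta_j\le Q/\beta_k\le Q^{(n-1)/(n-2)}$. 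In either case $Q\ge1$ (so $P$ is non-decreasing, and $1\le P\le P(T)$ on $[0,T]$) and
\[
\max_{1\le j\le n}|\beta_j-1|\le C\,(Q-1),
\]
where $C$ depends only on $n$ and $P(T)$, since the functions $1-Q^{-1/(n-2)}$ and $Q^{(n-1)/(n-2)}-1$ are $C^1$, vanish at $Q=1$, and $Q$ ranges over the compact interval $[1,P(T)]$. Taking $s=0$ here already shows that each $\alpha_j$ is bounded, say $m\le\alpha_j(t)\le M$ on $[0,T]$.

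From this the absolute continuity of the $\alpha_j$ follows readily. For $0\le s\le t\le T$ one has
\[
|\alpha_j(t)-\alpha_j(s)|=\alpha_j(s)\,|\beta_j-1|\le M\,C\,(Q-1)=M\,C\,\frac{P(t)-P(s)}{P(s)}\le M\,C\,\bigl(P(t)-P(s)\bigr),
\]
using $P(s)\ge P(0)=1$. Since $P$ is non-decreasing and locally absolutely continuous, the increments of each $\alpha_j$ over any finite family of pairwise disjoint subintervals are dominated by the corresponding increments of $P$; hence every $\alpha_j$ is locally absolutely continuous. Invoking Proposition~\ref{PR_differentiability_2points} for the pair $\sigma_1,\sigma_2$ as described then completes the proof.

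The main obstacle is precisely the rigidity estimate, and it is exactly where the restriction $n\ge3$ enters. For $n=2$ the relation $\beta_1\beta_2=Q$ is the only pairwise constraint and does not control the individual factors (one may let $\beta_1\to\infty$ and $\beta_2\to0$ keeping $Q$ fixed), which is why the two-point Proposition~\ref{PR_differentiability_2points} needs the extra regularity of a single trajectory $t\mapsto\varphi_{0,t}(z_0)$. The exponent $n-2\ge1$ in the bounds above is what rules out a vanishingly small factor $\beta_k$ once $Q$ is close to $1$, and thereby forces all factors uniformly close to $1$.
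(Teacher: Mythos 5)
Your proof is correct and follows essentially the same route as the paper's: both arguments use the Cowen--Pommerenke pairwise inequalities to show that for $n\ge 3$ the product of the dilation coefficients controls each individual factor, and then conclude by invoking Proposition~\ref{PR_differentiability_2points} for a pair of the fixed points. The only difference is cosmetic --- the paper runs the same estimate additively for $\lambda_j=\log\varphi_{0,t}'(\sigma_j)$ with the case distinction organized around the location of the Denjoy--Wolff point, whereas you work multiplicatively and convert to an additive bound via the Lipschitz estimate $|\beta_j-1|\le C(Q-1)$ on a compact $Q$-interval.
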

\begin{proof}
Again, in view of~\cite[Theorem~1.1]{BRFPLoewTheory}, we only need to prove that (ii) implies (i).
For $j=1,\ldots,n$ and $t\ge0$ denote $\lambda_j(t):=\log\varphi_{0,t}(\sigma_j)$ and $\lambda(t):=\sum_{j=1}^n\lambda_j(t)$. Fix now any $s\ge0$ and any $t\ge s$. Trivially, $\lambda_j(t)-\lambda_j(s)=0$ for all $j=1,\ldots,n$ if $\varphi_{s,t}=\id_\UD$. So suppose that $\varphi_{s,t}\neq\id_\UD$ and consider the following two cases.

\case1{The Denjoy\,--\,Wolff point of $\varphi_{s,t}$ does not belong to $F$.}
Then, in view of Remark~\ref{RM_Chain-Rule},\\\hbox to\textwidth{\hss $\lambda_j(t)-\lambda_j(s)>0$  and hence $0<\lambda_j(t)-\lambda_j(s)<\lambda(t)-\lambda(s)$ for all $j=1,\ldots,n$.}

\case2{One of the points $\sigma_1,\ldots,\sigma_n$ is the Denjoy\,--\,Wolff point of $\varphi_{s,t}$.} Without loss of generality we may assume that this point is~$\sigma_1$.
Fix any natural numbers ${j\in[2,n]}$ and $k\in{[2,n]\setminus\{j\}}$. Recall that  $\varphi'_{s,t}(\sigma_k)\varphi'_{s,t}(\sigma_1)>1$ by Remark~\ref{RM_CowenPomm}. Therefore, again we have ${0<\lambda_j(t)-\lambda_j(s)<\lambda(t)-\lambda(s)}$ for all  $j=2,\ldots,n$. It follows, in addition, that $$0\ge \lambda_1(t)-\lambda_1(s)=\lambda(t)-\lambda(s)-\sum_{j=2}^n\big(\lambda_j(t)-\lambda_j(s)\big)\ge -(n-2)(\lambda(t)-\lambda(s)).$$

Thus if (ii) holds, then all $\lambda_j$'s are locally absolutely continuous on~$[0,+\infty)$. By Proposition~\ref{PR_differentiability_2points} the latter implies that $(\varphi_{s,t})$ is an evolution family, which was to be proved.
\end{proof}

\REM{\begin{remark}
Taking into account the above statements, it might seem natural to put forward the following conjecture: if $(\varphi_{s,t})_{0\le s\le t}\subset\mathfrak P[\{\sigma_1,\sigma_2,\sigma_3\}]$, where $\sigma_j$'s are pairwise distinct points on~$\UC$, satisfies conditions EF1 and EF2 and if $[0,+\infty)\ni t\mapsto \varphi'_{0,t}(\sigma_1)$ is locally absolutely continuous, then $(\varphi_{s,t})$ is an evolution family.
\end{remark}}

\section{Proof of the main results}\label{S_PROOF}
In this section we will prove Theorem~\ref{TH_main} and Corollary~\ref{CR_specific}.
Throughout the whole section we will assume that $\tau,\sigma_1,\sigma_2,\ldots,\sigma_n$ are pairwise distinct points, $\tau\in\overline\UD$ and $F:=\{\sigma_1,\sigma_2,\ldots,\sigma_n\}\subset\UC$.

\subsection{Lemmas}\label{SS_lemmas}
First of all we prove a version of the Chain Rule for {\sl finite} angular derivatives.
\begin{lemma}\label{LM_ChainRule}
Let $\psi_1,\psi_2\in\Hol(\UD,\UD)$. If $\sigma\in\UC$ is a regular contact point of $\psi_2\circ\psi_1$, then $\sigma$ is also a regular contact point of~$\psi_1$ and $\psi_1(\sigma)$ is a regular contact point of~$\psi_2$, with $(\psi_2\circ\psi_1)'(\sigma)=\psi_2'(\psi_1(\sigma))\psi_1'(\sigma)$.
\end{lemma}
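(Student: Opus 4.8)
The plan is to reduce everything to the boundary dilation coefficient $\alpha_\psi(\sigma)=\liminf_{z\to\sigma}(1-|\psi(z)|)/(1-|z|)$ introduced in condition (ii.2) of Theorem~\ref{TH_JWC}, and to exploit its multiplicativity under composition. Write $\varphi:=\psi_2\circ\psi_1$; by hypothesis $\sigma$ is a regular contact point of $\varphi$, so $\omega:=\varphi(\sigma)\in\UC$ exists, $\varphi'(\sigma)$ is finite, and $\alpha_\varphi(\sigma)<+\infty$. For any regular contact point the classical Julia\,--\,Wolff\,--\,Carath\'eodory theory (the references for Theorem~\ref{TH_JWC}) moreover gives that the ratio $(1-|\psi(z)|)/(1-|z|)$ has non-tangential limit equal to $\alpha_\psi(\sigma)=|\psi'(\sigma)|$. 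The whole argument rests on the factorization, valid for every $z\in\UD$,
\[
\frac{1-|\varphi(z)|}{1-|z|}=\frac{1-|\psi_2(\psi_1(z))|}{1-|\psi_1(z)|}\cdot\frac{1-|\psi_1(z)|}{1-|z|}.
\]

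First I would show that $\sigma$ is a regular contact point of $\psi_1$. The key auxiliary fact is a universal lower bound: applying the Schwarz\,--\,Pick lemma to the automorphism of $\UD$ carrying $\psi_2(0)$ to $0$ yields a constant $c_2=c_2(\psi_2)>0$ with $1-|\psi_2(w)|\ge c_2\,(1-|w|)$ for all $w\in\UD$. Thus the first factor above is bounded below by $c_2$, and choosing any non-tangential sequence $z_n\to\sigma$ gives
\[
\frac{1-|\psi_1(z_n)|}{1-|z_n|}\le\frac1{c_2}\cdot\frac{1-|\varphi(z_n)|}{1-|z_n|}\xrightarrow[n\to\infty]{}\frac{\alpha_\varphi(\sigma)}{c_2}<+\infty.
\]
Hence $\alpha_{\psi_1}(\sigma)<+\infty$, and by the equivalence in Theorem~\ref{TH_JWC} together with Definition~\ref{DF_contact-BRFP} the point $\sigma$ is a regular contact point of $\psi_1$; in particular $\omega_1:=\psi_1(\sigma)\in\UC$ exists and $\alpha_{\psi_1}(\sigma)=|\psi_1'(\sigma)|>0$.

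Next I would treat $\psi_2$ at $\omega_1$ and simultaneously extract the chain rule formula. Fix a non-tangential sequence $z_n\to\sigma$ and set $w_n:=\psi_1(z_n)$; the finiteness of $\psi_1'(\sigma)$ guarantees, by the Lindel\"of-type property in the Julia\,--\,Wolff\,--\,Carath\'eodory theorem (a self-map with finite angular derivative at $\sigma$ maps every Stolz angle at $\sigma$ into a Stolz angle at $\psi_1(\sigma)$), that $w_n\to\omega_1$ non-tangentially. Dividing the factorization and using the non-tangential ratio limits for $\varphi$ and for $\psi_1$ gives $(1-|\psi_2(w_n)|)/(1-|w_n|)\to\alpha_\varphi(\sigma)/\alpha_{\psi_1}(\sigma)<+\infty$, so $\alpha_{\psi_2}(\omega_1)<+\infty$, i.e. $\omega_1$ is a regular contact point of $\psi_2$; moreover $\psi_2(w_n)=\varphi(z_n)\to\omega$ forces $\psi_2(\omega_1)=\omega$. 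As $\omega_1$ is now known to be regular, the non-tangential ratio limit also applies to $\psi_2$ along $w_n$, upgrading the convergence to the exact identity $\alpha_{\psi_2}(\omega_1)\,\alpha_{\psi_1}(\sigma)=\alpha_\varphi(\sigma)$. By the last assertion of Theorem~\ref{TH_JWC} one has $\varphi'(\sigma)=\omega\,\overline\sigma\,\alpha_\varphi(\sigma)$, $\psi_1'(\sigma)=\omega_1\,\overline\sigma\,\alpha_{\psi_1}(\sigma)$ and $\psi_2'(\omega_1)=\omega\,\overline{\omega_1}\,\alpha_{\psi_2}(\omega_1)$; multiplying the last two and using $|\omega_1|^2=1$ together with the exact identity yields $\psi_2'(\omega_1)\,\psi_1'(\sigma)=\omega\,\overline\sigma\,\alpha_\varphi(\sigma)=\varphi'(\sigma)$, which is the desired formula $(\psi_2\circ\psi_1)'(\sigma)=\psi_2'(\psi_1(\sigma))\,\psi_1'(\sigma)$.

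The main obstacle I anticipate is the bookkeeping of the passage to non-tangential limits. The bounds coming directly from the factorization are intrinsically one-sided, and to upgrade them to the \emph{exact} multiplicativity of the dilation coefficients I must know that $\psi_1$ sends non-tangential approach regions at $\sigma$ to non-tangential approach regions at $\omega_1$, so that the two-sided non-tangential ratio limit for $\psi_2$ at $\omega_1$ may legitimately be evaluated along the sequence $w_n=\psi_1(z_n)$. Keeping careful track of which convergences are genuine non-tangential limits and which are merely subsequential $\liminf$ bounds is exactly where the argument has to be handled with care.
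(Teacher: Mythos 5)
Your proof is correct, and on the main point it follows the same route as the paper: both arguments rest on factoring the Julia quotient,
\[
\frac{1-|\psi_2(\psi_1(z))|}{1-|z|}=\frac{1-|\psi_2(\psi_1(z))|}{1-|\psi_1(z)|}\cdot\frac{1-|\psi_1(z)|}{1-|z|},
\]
and on the strict positivity of boundary dilation coefficients, so that finiteness of $\alpha_{\psi_2\circ\psi_1}(\sigma)$ forces finiteness of both $\alpha_{\psi_1}(\sigma)$ and $\alpha_{\psi_2}(\psi_1(\sigma))$. The differences there are cosmetic: the paper takes a (possibly tangential) sequence realizing the $\liminf$, passes to a subsequence along which $\psi_1(z_n)$ converges, and uses $\liminf a_n\cdot\liminf b_n\le\liminf(a_nb_n)$ together with $\alpha>0$; you work with non-tangential sequences and replace the positivity of $\alpha_{\psi_2}$ by the uniform Schwarz--Pick bound $1-|\psi_2(w)|\ge c_2(1-|w|)$, which is the same fact in quantitative form. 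The genuine divergence is in the chain-rule identity: the paper simply invokes the known Chain Rule for angular derivatives (Remark~\ref{RM_Chain-Rule}, i.e.\ \cite[Lemma~(1.3.25) on p.\,92]{Abate}), whereas you re-derive it from the exact multiplicativity $\alpha_{\psi_2}(\omega_1)\,\alpha_{\psi_1}(\sigma)=\alpha_{\psi_2\circ\psi_1}(\sigma)$ combined with $\varphi'(\sigma)=\omega\overline\sigma\,\alpha_\varphi(\sigma)$ and its analogues. Your derivation is valid and has the merit of being self-contained, but be aware that it uses two refinements of the Julia--Wolff--Carath\'eodory theorem that are present only in the cited references, not in the statement of Theorem~\ref{TH_JWC} as reproduced in the paper: that the Julia quotient has an actual non-tangential \emph{limit} equal to $\alpha$, and that a finite angular derivative at $\sigma$ maps Stolz angles at $\sigma$ into Stolz angles at $\psi_1(\sigma)$; both are indispensable for evaluating the $\psi_2$-quotient along $w_n=\psi_1(z_n)$, exactly as you flag in your closing remark.
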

\begin{proof}
According to Theorem~\ref{TH_JWC}, there exists a sequence $(z_n)\subset\UD$  converging to~$\sigma$ such that $$\liminf_{n\to+\infty}\frac{1-|\psi_2(\psi_1(z_n))|}{1-|z_n|}<+\infty.$$
Passing to a subsequence, we may assume that $\big(\psi_1(z_n)\big)$ also converges. It is easy to see that $\omega:=\lim_{n\to+\infty}\psi_1(z_n)\in\UC$. Taking into account that by Theorem~\ref{TH_JWC}, both $\alpha_{\psi_1}(\sigma)$ and $\alpha_{\psi_2}(\omega)$ are different from zero, we  therefore conclude that
$$\liminf_{n\to+\infty}\frac{1-|\psi_1(z_n)|}{1-|z_n|}\, \liminf_{n\to+\infty}\frac{1-|\psi_2(\psi_1(z_n))|}{1-|\psi_1(z_n)|}\le  \liminf_{n\to+\infty}\frac{1-|\psi_2(\psi_1(z_n))|}{1-|z_n|}<+\infty$$
and hence $\alpha_{\psi_1}(\sigma)$ and $\alpha_{\psi_2}(\omega)$ are finite. Thus $\sigma$ and $\omega$ are regular contact points of $\psi_1$ and~$\psi_2$, respectively, while the formula for $(\psi_2\circ\psi_1)'(\sigma)$ holds by Remark~\ref{RM_Chain-Rule}.
\end{proof}

The following lemma is a slight extension of Theorem~\ref{TH_standRepr}.
\begin{lemma}\label{LM_embedd}
For any univalent $\phi\in\Hol(\UD,\UD)\setminus\Aut(\UD)$ there exists an evolution family $(\varphi_{s,t})$ such that $\phi=\varphi_{0,1}$. Moreover,  $\varphi_{s,t}\not\in\Aut(\UD)$~~ whenever $t>s\ge0$.
\end{lemma}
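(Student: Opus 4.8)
The plan is to reduce to the normalized situation already covered by Theorem~\ref{TH_standRepr} and then to transport the resulting evolution family back to $\phi$ by a \emph{time-dependent} change of coordinates that never leaves the set of non-automorphisms. First I would normalize: writing $a:=\phi(0)\in\UD$, pick the automorphism $M_a(z):=(z-a)/(1-\overline a z)$ and a rotation $R$ so that $T:=R\circ M_a$ yields $\psi:=T\circ\phi$ with $\psi(0)=0$ and $\psi'(0)>0$. Since $\phi\notin\Aut(\UD)$ and $T\in\Aut(\UD)$, the map $\psi$ is a univalent self-map with $\psi\notin\Aut(\UD)$, in particular $\psi\neq\id_\UD$; by the Schwarz lemma $0<\psi'(0)<1$, so $\log\psi'(0)<0$. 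Theorem~\ref{TH_standRepr} then supplies an evolution family $(\tilde\varphi_{s,t})$ with $\tilde\varphi_{0,1}=\psi$, $\tilde\varphi_{s,t}(0)=0$ and $\tilde\varphi_{s,t}'(0)=\exp\!\big((t-s)\log\psi'(0)\big)$; in particular $\tilde\varphi_{s,t}'(0)<1$, so $\tilde\varphi_{s,t}\notin\Aut(\UD)$ for every $t>s\ge0$.

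Since $\phi=T^{-1}\circ\psi$, I must undo the post-composition by $T^{-1}$. The naive idea of realizing $T^{-1}$ on a separate time block and concatenating is doomed, and this is the main obstacle: if $\varphi_{0,1}$ is an automorphism then every factor $\varphi_{s,t}$ with $0\le s\le t\le1$ is an automorphism as well (a surjective composition forces both factors to be surjective), so an automorphism can never be the time-one map of an evolution family of non-automorphisms. Hence $T^{-1}$ must be spread over the whole interval. As $\Aut(\UD)$ is a connected Lie group, choose a smooth path $[0,+\infty)\ni t\mapsto A_t\in\Aut(\UD)$ with $A_0=\id_\UD$ and $A_1=T^{-1}$ (for instance the one-parameter group of automorphisms through $T^{-1}$), and set
\[
\varphi_{s,t}:=A_t\circ\tilde\varphi_{s,t}\circ A_s^{-1},\qquad 0\le s\le t.
\]
Conditions EF1 and EF2 are immediate: $\varphi_{s,s}=A_s\circ\id_\UD\circ A_s^{-1}=\id_\UD$, while for $s\le u\le t$ the inner $A_u^{-1}\circ A_u$ cancels and EF2 for $(\tilde\varphi_{s,t})$ gives $\varphi_{u,t}\circ\varphi_{s,u}=A_t\circ\tilde\varphi_{s,t}\circ A_s^{-1}=\varphi_{s,t}$. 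By construction $\varphi_{0,1}=A_1\circ\tilde\varphi_{0,1}\circ A_0^{-1}=T^{-1}\circ\psi=\phi$. Finally, for $t>s$ the map $\tilde\varphi_{s,t}$ is not an automorphism, and pre- and post-composing with the automorphisms $A_s^{-1}$ and $A_t$ cannot create one (otherwise $\tilde\varphi_{s,t}=A_t^{-1}\circ\varphi_{s,t}\circ A_s$ would be an automorphism); thus $\varphi_{s,t}\notin\Aut(\UD)$, which is the ``moreover'' assertion.

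It remains to verify EF3, and here I would invoke the Herglotz machinery rather than a bare estimate. Differentiating $\varphi_{s,t}=A_t\circ\tilde\varphi_{s,t}\circ A_s^{-1}$ in $t$, and writing $V_t$ for the complete generator of the flow $A_t$ and $\tilde G$ for the Herglotz vector field associated with $(\tilde\varphi_{s,t})$, one finds that $(\varphi_{s,t})$ solves the Loewner\,--\,Kufarev initial value problem~\eqref{EQ_general-LK-INI} for
\[
G(w,t)=V_t(w)+A_t'\big(A_t^{-1}(w)\big)\,\tilde G\big(A_t^{-1}(w),t\big).
\]
For a.e.\ $t$ this time-slice is an infinitesimal generator, being the sum of the complete generator $V_t$ and the image of the generator $\tilde G(\cdot,t)$ under the automorphism $A_t$, and $\mathrm{T}\mathfrak{U}$ is a convex cone invariant under $\Aut(\UD)$; measurability in $t$ and the bound HVF3 follow from the smoothness of $t\mapsto(A_t,A_t',V_t)$ together with HVF3 for $\tilde G$. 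Thus $G$ is a Herglotz vector field, and by the correspondence of~\cite[Theorem~1.1]{BCM1} its associated evolution family is exactly $(\varphi_{s,t})$, which therefore satisfies EF3. (Alternatively, EF3 can be checked directly: for fixed $z$ the points $A_s^{-1}(z)$ and $\tilde\varphi_{s,\xi}(A_s^{-1}(z))$ stay in a fixed compact subset of $\UD$ as $s,\xi$ range over $[0,T]$, so the HVF3 bound for $\tilde G$ is uniform there, while $t\mapsto A_t$ is Lipschitz on compacta.) This completes the plan.
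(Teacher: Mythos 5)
Your proposal is correct and follows essentially the same route as the paper: normalize $\phi$ by a post-composed automorphism so that Theorem~\ref{TH_standRepr} applies, then conjugate the resulting evolution family by a one-parameter group of automorphisms interpolating between $\id_\UD$ and the inverse of that normalizing map, observing that the non-automorphism property is preserved under conjugation. The only difference is cosmetic: where you verify EF3 by hand via the transformed Herglotz vector field, the paper simply invokes \cite[Lemma~2.8]{SMP} for the fact that such a conjugation of an evolution family is again an evolution family.
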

\begin{proof}
Denote $$h(z):=\frac{|\phi'(0)|}{\phi'(0)}\,\frac{z-\phi(0)}{1-z\overline{\phi(0)}}\quad\text{ for all~$z\in\UD$}.$$ Then $\tilde\phi:=h\circ\phi\in\Hol(\UD,\UD)$ satisfies $\tilde\phi(0)=0$ and $\tilde\phi'(0)>0$ and hence by Theorem~\ref{TH_standRepr} there exists an evolution family $(\tilde\varphi_{s,t})$  such that $\tilde\varphi_{0,1}=\tilde\phi$. Since $h\in\Aut(\UD)$, there exists a one-parameter group~$(h_t)_{t\in\Real}\subset\Aut(\UD)$ such that $h_1=h$. Thus, according to \cite[Lemma~2.8]{SMP}, the functions $\varphi_{s,t}:=h_t^{-1}\circ\tilde\varphi_{s,t}\circ h_s$ form an evolution family, with $\varphi_{0,1}=\phi$ by the very construction. Since according to Theorem~\ref{TH_standRepr}, all $\tilde\varphi_{s,t}$'s with $s\neq t$ are in~$\Hol(\UD,\UD)\setminus\Aut(\UD)$, the same property holds for~$\varphi_{s,t}$'s, and the proof is finished.
\end{proof}

In the most simple cases, the above lemma is enough for our purposes, while in other cases a bit deeper analysis is needed. Denote by $R(D,w_0)$ the conformal radius of a simply connected domain~$D\subset\Complex$ w.r.t.~$w_0$.
\begin{lemma}\label{LM_two_chains}
Let $D_1\subset D_2$, $D'_1\subset D'_2$ be four hyperbolic  simply connected domains such that $D_1\subset D'_1$ and $D_2\setminus D_1=D'_2\setminus D'_1$. Then for any $w_0\in D_1$ we have
\begin{equation}\label{EQ_confRad_ineq}
\frac{R(D_2,w_0)}{R(D_1,w_0)}\le \frac{R(D'_2,w_0)}{R(D'_1,w_0)}
\end{equation}
and the equality holds if only if~$D_1=D_2$.
\end{lemma}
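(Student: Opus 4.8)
The plan is to translate the statement into Green's functions and then run a maximum-principle comparison on the smallest of the four domains, $D_1$. Recall that for a hyperbolic simply connected domain $D\ni w_0$ the Green's function $g_D(\cdot,w_0)$ has the expansion $g_D(z,w_0)=\log\frac1{|z-w_0|}+\log R(D,w_0)+o(1)$ as $z\to w_0$, so that $\log R(D,w_0)$ is the Robin constant of $D$ at $w_0$. Setting $u_1:=g_{D_2}(\cdot,w_0)-g_{D_1}(\cdot,w_0)$ and $u_2:=g_{D'_2}(\cdot,w_0)-g_{D'_1}(\cdot,w_0)$, the logarithmic singularities cancel, so $u_1$ is harmonic on $D_1$ and $u_2$ is harmonic on $D'_1\supseteq D_1$, with $u_1(w_0)=\log\bigl(R(D_2,w_0)/R(D_1,w_0)\bigr)$ and $u_2(w_0)=\log\bigl(R(D'_2,w_0)/R(D'_1,w_0)\bigr)$. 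Thus \eqref{EQ_confRad_ineq} is precisely $u_1(w_0)\le u_2(w_0)$. Since $u_2-u_1$ is harmonic on $D_1$ and, as every boundary point of a simply connected domain is regular, extends continuously to $\overline{D_1}$, the minimum principle reduces the claim to the boundary inequality $u_1\le u_2$ on $\partial D_1$.

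To prove this I would first record the elementary inclusion $D_2\subseteq D'_2$, which follows from $D_2=D_1\cup(D_2\setminus D_1)\subseteq D'_1\cup(D'_2\setminus D'_1)=D'_2$ using $D_1\subseteq D'_1$ and the hypothesis $D_2\setminus D_1=D'_2\setminus D'_1$; by monotonicity of the Green's function (with the convention $g_D\equiv0$ off $D$) this gives $g_{D_2}\le g_{D'_2}$ and $g_{D'_1}\le g_{D'_2}$. Now fix $\zeta\in\partial D_1$; since $D_1\subseteq D'_1$ we have $\zeta\in\overline{D'_1}$, leaving two cases. If $\zeta\in\partial D'_1$, then $g_{D'_1}(\zeta,w_0)=g_{D_1}(\zeta,w_0)=0$, so the boundary inequality reads $g_{D_2}(\zeta,w_0)\le g_{D'_2}(\zeta,w_0)$, which holds. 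If instead $\zeta\in D'_1$, then $\zeta\notin A:=D_2\setminus D_1=D'_2\setminus D'_1$ (because $A\cap D'_1=\varnothing$) and $\zeta\notin D_1$, whence $\zeta\notin D_1\cup A=D_2$; but $\zeta\in\overline{D_1}\subseteq\overline{D_2}$, so $\zeta\in\partial D_2$ and therefore $u_1(\zeta)=g_{D_2}(\zeta,w_0)-g_{D_1}(\zeta,w_0)=0$, while $u_2(\zeta)=g_{D'_2}(\zeta,w_0)-g_{D'_1}(\zeta,w_0)\ge0$ by $D'_1\subseteq D'_2$. In both cases $u_1(\zeta)\le u_2(\zeta)$, as needed.

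For the equality case, suppose \eqref{EQ_confRad_ineq} holds with equality, i.e.\ $(u_2-u_1)(w_0)=0$. By the previous paragraph $u_2-u_1\ge0$ on $D_1$, and it vanishes at the interior point $w_0$, so the strong maximum principle forces $u_2\equiv u_1$ on $D_1$ and hence $u_1=u_2$ on $\partial D_1$. Assume $D_1\subsetneq D'_1$ (the degenerate case $D_1=D'_1$ forces $D_2=D'_2$ and turns \eqref{EQ_confRad_ineq} into a trivial identity). Then $\partial D_1\cap D'_1\ne\varnothing$, and at such a point $\zeta$ the second case above gives $0=u_1(\zeta)=u_2(\zeta)=g_{D'_2}(\zeta,w_0)-g_{D'_1}(\zeta,w_0)$; but $\zeta$ is an interior point of $D'_1$, so $g_{D'_1}(\zeta,w_0)>0$, and unless $D'_1=D'_2$ the strict monotonicity of the Green's function yields $g_{D'_2}(\zeta,w_0)>g_{D'_1}(\zeta,w_0)$, a contradiction. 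Hence $D'_1=D'_2$, i.e.\ $A=\varnothing$, equivalently $D_1=D_2$; the converse implication is immediate since $D_1=D_2$ gives $A=\varnothing$ and $D'_1=D'_2$.

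The routine ingredients here — the Robin-constant expansion, monotonicity and strict monotonicity of the Green's function under (strict) inclusion, and regularity of boundary points of simply connected domains — are all standard. The point demanding the most care, and where I expect the argument to be delicate, is the boundary analysis: verifying rigorously the purely set-theoretic fact that a boundary point of $D_1$ lying in the \emph{interior} of $D'_1$ is automatically a boundary point of $D_2$ (so that $g_{D_2}$ vanishes there), and securing the minimum principle for $u_2-u_1$ when $D_1$ is unbounded, which if necessary can be arranged by a preliminary \Mobius change of variables moving $w_0$ to bound $D_1$, under which both sides of \eqref{EQ_confRad_ineq} are invariant.
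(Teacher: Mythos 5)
Your proof is correct, but it takes a genuinely different route from the paper. The paper disposes of the lemma in three lines: after normalizing $w_0=0$ it observes that the hypotheses force $D_2\cap D_1'=D_1$ and $D_2\cup D_1'=D_2'$, passes to the complements $K_1,K_2$ of $D_2,D_1'$ under the inversion $z\mapsto 1/z$, and identifies \eqref{EQ_confRad_ineq} with Renggli's strong subadditivity of logarithmic capacity, $\Cap(K_1\cup K_2)\Cap(K_1\cap K_2)\le\Cap(K_1)\Cap(K_2)$, which it cites from the literature. You instead prove the needed special case from scratch: the differences of Green's functions $u_1=g_{D_2}-g_{D_1}$ and $u_2=g_{D_2'}-g_{D_1'}$ have cancelling singularities, their values at $w_0$ are the logarithms of the two sides of \eqref{EQ_confRad_ineq}, and the set identities above reduce everything to the boundary comparison $u_1\le u_2$ on $\partial D_1$, which your two-case analysis ($\zeta\in\partial D_1'$ versus $\zeta\in D_1'$, the latter forcing $\zeta\in\partial D_2$) verifies correctly. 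What your approach buys is self-containedness and, more substantively, a direct proof of the equality case via the strong minimum principle, which the paper's citation does not explicitly deliver; what it costs is the reliance on regularity of all boundary points (available here because the complement of a hyperbolic simply connected domain is a non-degenerate continuum) and some care with unbounded $D_1$ — note that your proposed M\"obius normalization is not always available (the finite complement of $D_1$ may have empty interior), but it is also unnecessary: working in $\overline{\C}$, the point $\infty$ lies in $\partial D_1'$ and falls under your first case, so the minimum principle applies once the liminf is checked at every boundary point including $\infty$. Finally, your explicit flagging of the degenerate case $D_1=D_1'$ is apt: there both sides of \eqref{EQ_confRad_ineq} coincide identically, so the ``only if'' part of the equality assertion fails as literally stated; this is a defect of the lemma's formulation shared by the paper, and immaterial to its application in Lemma~\ref{LM_multiparam_family}, where only the inequality is used.
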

\begin{proof}
Without loss of generality we may assume that~${w_0=0}$. Denote by $K_1$ and $K_2$ the images of $\ComplexE\setminus D_2$ and $\ComplexE\setminus D'_1$, respectively, under the inversion $z\mapsto 1/z$. The hypothesis of the lemma  implies that~$D_2\cap D'_1=D_1$ and $D_2\cup D'_1=D'_2$. Therefore, inequality~\eqref{EQ_confRad_ineq} is equivalent to the following analogue of strong subadditivity for the logarithmic capacity: $$\Cap(K_1\cup K_2)\Cap(K_1\cap K_2) \le \Cap(K_1)\Cap(K_2),$$ which was proved in~\cite{Renggli}.
\end{proof}
\begin{remark}
Inequality~\eqref{EQ_confRad_ineq} leads to a conclusion, which seems to be paradoxical at the first glance: adding the same ``piece'' $D_2\setminus D_1$ to a larger domain~$D'_1$ results in \textsl{larger} relative change of the conformal radius.
\end{remark}

To simplify the statement of the next lemma let us setup some notation. First of all we will assume that the points $\sigma_j$'s are numbered in such a way that the open arc $L_j\subset\UC$ going in the counter-clockwise direction from $\sigma_j$ to~$\sigma_{j+1}$, where for convenience we put $\sigma_{n+1}:=\sigma_1$, does not contain other points from~$F$. Now let $\phi\in\mathfrak P[F]$ and  $f\in\Hol(\UD,\UD)$ be both univalent, with~$f(\UD)\supset \phi(\UD)$. Then for each~$j=1,\ldots,n$ there exists a unique regular contact point~$\xi_j$ of~$f$ such that $f(\xi_j)=\sigma_j$, see, \textit{e.g.}, \cite[Theorem~4.14 on p.\,83]{Pommerenke2}. We denote this point by~$f^{-1}(\sigma_j)$. Further on, we use the lower index, \textit{e.g.}, $a_1$ or $b_3$, to denote the components of vectors $a,a',b,b',x\in[0,1]^n$.
\begin{lemma}\label{LM_multiparam_family}
Let $\phi\in\UF[F]$, $n>1$, and  $z_0\in\UD$. Suppose that there exists no ${j\in[1,n]\cap \Natural}$ such that $\phi$ extends continuously to~$L_j$ with $\phi(L_j)\subset\UC$. Then there exists a family~$(f_a)_{a\in[0,1]^n}$ of  univalent holomorphic self-maps of~$\UD$ satisfying the following conditions:
\begin{mylist}
\item[(i)] $f_a(z_0)=\phi(z_0)$ and $f'_a(z_0)\overline{\phi'(z_0)}>0$ for all $a\in[0,1]^n$;

\item[(ii)] $f_{(0,\ldots,0)}=\phi$ and $f_{(1,\ldots,1)}\in\Aut(\UD)$;

\item[(iii)] if $a,b\in[0,1]^n$, $a\neq b$ and $a_j\le b_j$ for all $j=1,\ldots,n$, then $f_a(\UD)\varsubsetneq f_b(\UD)$;

\item[(iv)] if $k\in[1,n]\cap\Natural$ and if $a,b\in[0,1]^n$ with $a_k\le b_k$ and $a_j=b_j$ for all $j\in([1,n]\setminus\{k\})\cap\N$, then $\varphi_{a,b}:=f_b^{-1}\circ f_a$ extends continuously to $\UC\setminus C_k(a)$, with $\varphi_{a,b}(\UC\setminus C_k(a))=\UC\setminus C_k(b)$, where  $C_k(x)$, $x\in[0,1]^n$, is the closed arc of~$\UC$ going counter-clockwise from~$f_x^{-1}(\sigma_k)$ to~$f^{-1}_x(\sigma_{k+1})$;

\item[(v)] the map~$[0,1]^n\ni a\mapsto R\big(f_a(\UD),\phi(z_0)\big)$ is Lipschitz continuous.
\end{mylist}
\end{lemma}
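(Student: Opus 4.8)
The plan is to realize every $f_a$ as a suitably normalized Riemann map onto a simply connected domain $\Omega_a$ obtained from $\phi(\UD)$ by ``filling in'' the gaps between $\partial\phi(\UD)$ and $\UC$, using one coordinate of $a$ to control each gap. First I would set up the gap decomposition. Since $\phi\in\UF[F]$ is univalent and each $\sigma_j$ is a BRFP, the points $\sigma_1,\dots,\sigma_n$ are accessible boundary points of $D_0:=\phi(\UD)$ lying on $\UC$, and $\partial D_0$ consists of these $n$ points together with free boundary arcs $\gamma_1,\dots,\gamma_n$, where $\gamma_j$ joins $\sigma_j$ to $\sigma_{j+1}$. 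Consequently $\UD\setminus\overline{D_0}$ splits into components $B_1,\dots,B_n$, with $B_j$ bounded by $\gamma_j$ and $L_j$. The standing hypothesis, that $\phi$ maps no $L_j$ into $\UC$, is exactly what guarantees that \emph{every} gap $B_j$ is non-degenerate; this is what will make the family strictly monotone in each coordinate, as required by~(iii).

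Next, for each $j$ I would fix a continuous, strictly increasing sweep of $B_j$ by Jordan crosscuts $\Gamma_j(s)$, $s\in[0,1]$, joining $\sigma_j$ to $\sigma_{j+1}$, with $\Gamma_j(0)=\gamma_j$ and $\Gamma_j(1)=L_j$; such a sweep exists by a standard construction, e.g. level arcs of a conformal map of $B_j$. For $a\in[0,1]^n$ I set $\Omega_a$ to be the Jordan domain bounded by $\{\Gamma_j(a_j)\}_{j}$, and let $f_a\colon\UD\to\Omega_a$ be the unique univalent map with $f_a(z_0)=w_0:=\phi(z_0)$ and $f_a'(z_0)\,\overline{\phi'(z_0)}>0$, which exists by the Riemann mapping theorem. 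This immediately gives~(i); moreover $\Omega_{(0,\dots,0)}=D_0$ forces $f_{(0,\dots,0)}=\phi$ and $\Omega_{(1,\dots,1)}=\UD$ forces $f_{(1,\dots,1)}\in\Aut(\UD)$, proving~(ii), while strict monotonicity of the sweeps (with non-degeneracy of the relevant gap) yields $\Omega_a\varsubsetneq\Omega_b$ whenever $a\le b$, $a\ne b$, which is~(iii). For~(iv), when only the $k$-th coordinate is raised from $a_k$ to $b_k$ the domains $\Omega_a\subset\Omega_b$ share all boundary arcs $\gamma_j$, $j\ne k$; since $\partial\Omega_a$, $\partial\Omega_b$ are Jordan curves, $f_a$, $f_b$ extend to homeomorphisms of $\overline\UD$ by Carath\'eodory's theorem, and a direct boundary-correspondence argument shows that $f_b^{-1}\circ f_a$ extends continuously across $\UC\setminus C_k(a)$ and carries it onto $\UC\setminus C_k(b)$.

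The main work, and the place where Lemma~\ref{LM_two_chains} enters, is~(v). I would first reduce joint Lipschitz continuity to a uniform single-coordinate estimate: since $a\mapsto R(\Omega_a,w_0)$ is increasing in each coordinate, it suffices to bound $R(\Omega_{a+te_k})-R(\Omega_a)$ by $C|t|$ with $C$ independent of the remaining coordinates. Fixing the other coordinates at levels $c$, I would compare, via Lemma~\ref{LM_two_chains}, the pair $D_1:=\Omega_a\subset D_2:=\Omega_{a+te_k}$ with the pair $D_1'\subset D_2'$ obtained by raising all coordinates $j\ne k$ to $1$. Because the region added in the $k$-th gap is the \emph{same} set in both pairs and $D_1\subset D_1'$, the lemma gives $R(D_2)/R(D_1)\le R(D_2')/R(D_1')$, whence $R(D_2)-R(D_1)\le R(D_2')-R(D_1')$, using $R(D_1)\le R(D_1')$. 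The right-hand side involves only the single-gap domains $D'_s=\UD\setminus(\text{part of }B_k\text{ beyond }\Gamma_k(s))$, and I would make $s\mapsto R(D'_s,w_0)$ affine by reparametrizing the $k$-th sweep (possible since it is continuous, strictly increasing, with finite range bounded by $R(\UD,w_0)$). This produces the uniform bound $R(\Omega_{a+te_k})-R(\Omega_a)\le C_k|t|$, and summing over coordinates gives the desired Lipschitz continuity.

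The hardest part is precisely this last step. Without the subadditivity-type inequality of Lemma~\ref{LM_two_chains}, the single-coordinate increments of the conformal radius would depend on the levels of the other coordinates in an uncontrolled way, and no choice of sweep parametrization could render $R(\Omega_a,w_0)$ Lipschitz in all $n$ variables simultaneously; the lemma is exactly what decouples the $k$-th increment from the remaining coordinates by comparing it to the reference configuration in which all other gaps are fully filled.
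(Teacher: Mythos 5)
Your overall architecture (a monotone $n$-parameter family of simply connected domains interpolating between $\phi(\UD)$ and $\UD$, normalized Riemann maps, and Lemma~\ref{LM_two_chains} to decouple the $k$-th increment of the conformal radius by comparing with the configuration where all other coordinates are set to $1$) matches the paper, and your treatment of (v) is essentially the paper's argument. But the foundation of your construction has a genuine gap: you assert that $\partial\phi(\UD)$ consists of the points $\sigma_j$ together with $n$ free boundary arcs $\gamma_j$, and that $\UD\setminus\overline{\phi(\UD)}$ splits into $n$ Jordan ``gaps'' $B_j$ bounded by $\gamma_j$ and $L_j$, which you then sweep by crosscuts. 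For a general univalent $\phi\in\UF[F]$ this is false. The boundary of $\phi(\UD)$ need not be locally connected, need not be a union of Jordan arcs, and $\UD\setminus\overline{\phi(\UD)}$ can fail to have the asserted structure altogether: for instance, if $\phi(\UD)$ is $\UD$ minus two radial slits (one ``seen'' from $L_1$, one from $L_2$, so that the hypothesis of the lemma holds), then $\UD\setminus\overline{\phi(\UD)}=\emptyset$, your gaps $B_j$ are empty, every $\Omega_a$ equals $\phi(\UD)$, and (ii) and (iii) fail. Relatedly, your reading of the hypothesis as ``every gap $B_j$ is non-degenerate'' is not what it says: the relevant consequence is that no $L_j$ is mapped into $\UC$, which forbids $\phi(\Delta_j)=\Omega_j$ in the paper's decomposition but does not force $\UD\setminus\overline{\phi(\UD)}$ to have interior adjacent to $L_j$.

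The paper circumvents exactly this difficulty by never touching $\partial\phi(\UD)$ directly. It cuts $\UD$ along $\phi(\Gamma)$, where $\Gamma=\bigcup_j[0,\sigma_j)$ is the star of radii: since each $\sigma_j$ is a regular contact point, $\phi([0,\sigma_j])$ is always a Jordan arc, so the sectors $\Omega_j$ (bounded by $L_j\cup\phi([0,\sigma_j])\cup\phi([0,\sigma_{j+1}])$) are honest Jordan domains even when $\phi(\UD)$ is wild, and the intermediate domains are obtained not by geometric crosscut sweeps of a gap but abstractly, by embedding the Riemann map of $U_k:=\phi(\Gamma)\cup\phi(\Delta_k)\cup\bigcup_{j\ne k}\Omega_j$ into an evolution family (Lemma~\ref{LM_embedd}) and reparametrizing so that $\log R$ is affine. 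Your proof would need to be rebuilt on such a decomposition (or you would need to add the strong hypothesis that $\phi$ extends to a homeomorphism of $\overline\UD$ onto the closure of a Jordan domain, which the lemma does not assume); as written, steps (ii), (iii), (iv) and the very definition of $\Omega_a$ do not go through.
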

\begin{proof}
For the sake of simplicity, using automorphisms of~$\UD$, we may assume that~$z_0=0$.
Let $\Gamma:=\bigcup_{j=1}^n[0,\sigma_j)$. For each $j=1,\ldots,n$ denote by~$\Delta_j$ the Jordan domain bounded by $L_j\cup[0,\sigma_j]\cup[0,\sigma_{j+1}]$ and by $\Omega_j$ the Jordan domain bounded by $L_j\cup\phi([0,\sigma_j])\cup\phi([0,\sigma_{j+1}])$. Then $\Delta_j$'s are connected components of ${\UD\setminus\Gamma}$, $\phi(\Delta_j)$'s are connected components of $\phi(\UD)\setminus\phi(\Gamma)$, and $\Omega_j$'s are connected components of ${\UD\setminus \phi(\Gamma)}$. Therefore, $$\UD=\phi(\Gamma)\cup\bigcup_{j=1}^n\Omega_j,\quad \phi(\UD)=\phi(\Gamma)\cup\bigcup_{j=1}^n\phi(\Delta_j),$$~and $\phi(\Delta_j)=\phi(\UD)\cap\Omega_j$ for all~$j=1,\ldots,n$.
In particular, the following statement hold:
\claim{(A)}{Let $D_j$, $j=1,\ldots,n$, be simply connected domains such that $\phi(\Delta_j)\subset D_j\subset\Omega_j$ for each $j=1,\ldots,n$. Then $D:=\phi(\Gamma)\cup\bigcup_{j=1}^n D_j$ is a simply connected domain in~$\UD$.}
Indeed, $D=\phi(\UD)\cup\bigcup_{j=1}^n D_j$, $D_j$ are pairwise disjoint, and $D_j\cap \phi(\UD)=\phi(\Delta_j)$ is simply connected for each~$j=1,\ldots,n$.

Fix any $k\in[1,n]\cap \Natural$.
By~(A) the set $$U_k:=\phi(\Gamma)\cup\phi(\Delta_k)\cup\!\!\bigcup_{\substack{j=1,\ldots,n\\j\neq k}}\!\!\Omega_j$$ is a simply connected domain containing $\phi(\Delta)$. Let $\phi_k$ be any conformal mapping of $\UD$ onto~$U_k$. From the hypothesis of the lemma it follows with the help of the Carath\'eodory Extension Theorem  that $\phi(\Delta_k)\neq\Omega_k$ and hence $U_k\neq\UD$. Then by Lemma~\ref{LM_embedd} there exists an evolution family~$(\varphi^k_{s,t})$ such that $\varphi^k_{0,1}=\phi_k$ and $\varphi^k_{s,t}\not\in\Aut(\UD)$ whenever ${t>s\ge0}$. The family of domains $U_k(s):=\varphi^k_{s,1}(\UD)$, $s\in[0,1]$, has the following properties:

\claim{(B)}{$U_k(0)=U_k$ and $U_k(1)=\UD$;}
\claim{(C)}{$U_k(s)\varsubsetneq U_k(t)$ whenever $0\le s<t\le1;$}
\claim{(D)}{$[0,1]\ni s\mapsto R(U_k(s),\phi(z_0))$ is continuous and strictly increasing.}

In particular, there exists an increasing injective function $s_k:[0,1]\to[0,1]$  such that
$[0,1]\ni\nu\mapsto\log R\big(U_k(s_k(\nu)),0\big)$ is a (non-homogeneous) linear function.
Now we define $$G_a:=\bigcap_{j=1}^n U_j(s_j(a_j))=\phi(\Gamma)\cup\bigcup_{j=1}^n U_j(s_j(a_j))\cap\Omega_j\quad \text{for all~}a\in[0,1]^n.$$

For each $j=1,\ldots,n$ and all $s\in[0,1]$,  $\Omega_j$ is a Jordan domain and $U_j(s)\cap\partial\Omega_j=\phi([0,\sigma_j)\cup[0,\sigma_{j+1}))$ is a cross-cut in~$U_j(s)$ thanks to~(B) and~(C). Hence $U_j(s)\cap\Omega_j\supset \phi(\Delta_j)$ is a simply connected domain. Then by~(A), for all~$a\in[0,1]^n$, $G_a$ is a simply connected domain containing~$\phi(\UD)$. Therefore, for each~$a\in[0,1]^n$ there exists a unique  conformal mapping~$f_a$ of~$\UD$ onto~$G_a$ normalized by~$f_a(0)=\phi(0)$, $f'_a(0)\overline{\phi'(0)}>0$. Clearly, the family~$(f_a)$ defined in this way satisfies conditions (i)\,--\,(iii).

To prove~(iv) we note first that for any $x\in[0,1]^n$, the arc $J_k:=\phi\big([0,\sigma_k)\cup[0,\sigma_{k+1})\big)$ is a cross-cut in~$G_x$. On the one hand, the two connected components of~$G_x\setminus J_k$ are $G_x\cap\Omega_k$ and $G_x\setminus\overline{\Omega_k}$. On the other hand, the preimages of these components under~$f_x$ are the Jordan domains $W'_k(x),W''_k(x)\subset\UD$ bounded by $f_x^{-1}(J_k)\cup\{f_x^{-1}(\sigma_k),f_x^{-1}(\sigma_{k+1})\}\cup C_k(x)$ and $f_x^{-1}(J_k)\cup\{f_x^{-1}(\sigma_k),f_x^{-1}(\sigma_{k+1})\}\cup \big(\UC\setminus C_k(x)\big)$, respectively, see, \textit{e.g.}, \cite[\S2.4]{Pommerenke2}. Moreover,  if $a$ and $b$ are such as in~(iii), then by construction, $G_a\setminus\overline{\Omega_k}=G_b\setminus\overline{\Omega_k}$ and hence $\varphi_{a,b}(W''_k(a))=W''_k(b)$. Then by the Carath\'eodory Extension Theorem, $\varphi_{a,b}|_{W''_k(a)}$ extends continuously and injectively to~$\overline{W''_k(a)}$. Note also that by construction, $\varphi_{a,b}(f_a^{-1}(J_k))=f_b^{-1}(J_k)$. It follows that $\varphi_{a,b}(\UC\setminus C_k(a))=\UC\setminus C_k(b)$, which was to be shown.

It remains to prove~(v). Note first that for each $k\in[0,n]\cap\Natural$, by construction the function $V(a):=\log R(G_a,0)$ is linear  w.r.t.~$a_k$ on the set $$\big\{a\in[0,1]^n:\, a_j=1\text{~for all $j=1,\ldots,n$ except for~$j=k$}\big\}.$$ Suppose now that ${a,b\in[0,1]^n}$, $a_j=b_j$ for all $j=1,\ldots,n$ except for~$j=k$, and $a_k\le b_k$. Define $a',b'\in[1,n]^n$ by setting $a'_k:=a_k$, $b'_k:=b_k$, and $a_j:=b_j:=1$ for all $j=1,\ldots,n$ except for~$j=k$. Then by construction, $G_a\subset G_b$, $G_{a'}\subset G_{b'}$, $G_a\subset G_{a'}$ and $G_b\setminus G_a=G_{b'}\setminus G_{a'}$. Therefore, by Lemma~\ref{LM_two_chains}, $0\le V(b)-V(a)\le V(b')-V(a')$. Thus, $V$ is Lipschitz continuous w.r.t. all~$a_k$'s on the whole set~$[0,1]^n$, and~(v) follows immediately.
\end{proof}

We will make use of the following classical result, known as Loewner's Lemma.
\begin{result}[{Loewner~\cite[Hilfssatz~I]{Loewner}}]\label{TH_LM-Loewner}
If $\varphi\in\Hol(\UD,\UD)$, $\varphi(0)=0$, extends continuously to an open arc~$L\subset\partial\UD$ and $\varphi(L)\subset\partial\UD$, then $|L|\le|\varphi(L)|$, where $|\cdot|$ stands for the length of an arc. The equality holds only if $\varphi\in\Aut(\UD)$.
\end{result}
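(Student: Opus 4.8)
The plan is to reduce the statement to a comparison of harmonic measures. Write $\Lambda:=\varphi(L)$, which is a subarc of~$\UC$ because it is the continuous image of the connected set~$L$, and let $h$ denote the harmonic measure of~$\Lambda$, i.e. the bounded harmonic function on~$\UD$ with $0\le h\le1$ whose boundary values are~$1$ on~$\Lambda$ and~$0$ on~$\UC\setminus\Lambda$ (the Poisson integral of the indicator of~$\Lambda$). Since harmonic measure at the centre is normalized arc length, $h(0)=|\Lambda|/(2\pi)=|\varphi(L)|/(2\pi)$. The key object is the composition $H:=h\circ\varphi$, which is again bounded and harmonic on~$\UD$ and which satisfies $H(0)=h(\varphi(0))=h(0)$ precisely because $\varphi(0)=0$; this is exactly where the normalization enters.

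First I would establish the inequality. Being bounded and harmonic, $H$ is the Poisson integral of its nontangential boundary function~$H^*$, so $H(0)=\tfrac1{2\pi}\int_0^{2\pi}H^*(e^{it})\DI t$, and clearly $0\le H^*\le1$. Moreover, for $\zeta\in L$ one has $\varphi(\zeta)\in\Lambda$ by definition, and for a.e. such~$\zeta$ the value $\varphi(\zeta)$ lies in the \emph{open} arc~$\Lambda$; since $h$ extends continuously with value~$1$ at interior points of~$\Lambda$, continuity of~$\varphi$ on~$L$ gives $H(z)\to1$ as $z\to\zeta$, whence $H^*=1$ a.e. on~$L$. Therefore
\[
\frac{|\varphi(L)|}{2\pi}=H(0)=\frac1{2\pi}\int_0^{2\pi}H^*(e^{it})\DI t\ge\frac1{2\pi}\,|L|=\frac{|L|}{2\pi},
\]
using $H^*\ge0$ everywhere and $H^*=1$ on the set of measure~$|L|$ corresponding to~$L$. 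This is the desired inequality $|L|\le|\varphi(L)|$.

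For the equality case, suppose $|\varphi(L)|=|L|$. Then the displayed chain collapses, which forces $H^*=0$ a.e. on $\UC\setminus L$ and $H^*=1$ a.e. on~$L$; in other words $H=h\circ\varphi$ coincides with the harmonic measure~$u$ of the arc~$L$. Now I would invoke the fact that the harmonic measure of a proper arc is the real part of a conformal map of~$\UD$ onto the vertical strip $\Pi:=\{w:0<\Re w<1\}$: let $S_\Lambda,S_L\colon\UD\to\Pi$ be conformal with $\Re S_\Lambda=h$ and $\Re S_L=u$ (both arcs are proper in this situation, as $|\Lambda|=|L|<2\pi$). From $\Re(S_\Lambda\circ\varphi)=\Re S_L$ and connectedness of~$\UD$ I obtain $S_\Lambda\circ\varphi=S_L+ic$ for some $c\in\R$. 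Since $S_L$ is injective with $S_L(\UD)=\Pi$, the map $S_\Lambda\circ\varphi=S_L+ic$ is injective with image $\Pi+ic=\Pi=S_\Lambda(\UD)$; as $S_\Lambda$ is a conformal bijection onto~$\Pi$, this forces $\varphi$ to be injective and $\varphi(\UD)=\UD$. Hence $\varphi\in\Aut(\UD)$.

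The routine ingredients are classical (existence and Poisson recovery of boundary values for bounded harmonic functions, and the strip representation of harmonic measure). The one point requiring genuine care, which I expect to be the main technical obstacle, is the claim $H^*=1$ a.e. on~$L$: it rests on the assertion that $\varphi$ sends only a null subset of~$L$ onto the two endpoints of~$\Lambda$. This follows because $\varphi$ extends holomorphically across~$L$ by Schwarz reflection (it carries the boundary arc~$L$ into~$\UC$), so $\varphi|_L$ is real-analytic and nonconstant and the preimage of each endpoint is discrete.
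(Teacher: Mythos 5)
The paper does not prove this statement at all: it is quoted as a classical result (Loewner's Hilfssatz~I) with a citation to \cite{Loewner}, so there is no internal proof to compare against. Judged on its own, your harmonic-measure argument is correct and complete. The chain $|\varphi(L)|/(2\pi)=h(0)=H(0)=\frac1{2\pi}\int H^*\ge|L|/(2\pi)$ is sound: $H=h\circ\varphi$ is bounded harmonic, the normalization $\varphi(0)=0$ is used exactly where you say, and the only delicate point --- that $H^*=1$ a.e.\ on $L$ --- is handled properly by Schwarz reflection across $L$ plus the fact that a nonconstant real-analytic function has discrete fibres, so only a null set of $L$ hits the endpoints of $\Lambda$. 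The equality analysis via the strip uniformization of harmonic measure ($h=\Re S_\Lambda$, $u=\Re S_L$ with $S_\Lambda,S_L:\UD\to\Pi$ conformal, whence $S_\Lambda\circ\varphi=S_L+ic$ and $\varphi=S_\Lambda^{-1}\circ(S_L+ic)\in\Aut(\UD)$) is also correct. One small caveat: your equality discussion tacitly assumes $|L|<2\pi$, and indeed the statement as literally printed fails in the degenerate case $L=\UC\setminus\{\text{point}\}$ (take $\varphi(z)=z^2$: then $|L|=|\varphi(L)|=2\pi$ but $\varphi\notin\Aut(\UD)$); this is a defect of the quoted formulation rather than of your proof, and is irrelevant to the paper's applications, where $L$ is always a proper arc between distinct marked points. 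It would be worth one sentence in your write-up making the standing assumption $|L|<2\pi$ explicit.
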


The statement below can be called a ``boundary three-point version of Loewner's Lemma".
\begin{lemma}\label{LM_3p_Loewner}
Let $f\in\Hol(\UD,\UD)$ be a univalent function with three pairwise distinct BRFPs $\sigma_1,\sigma_2,\tau\in\UC$.  The following statements hold:
\begin{mylist}
\item[(i)]  Suppose that $f$ extends continuously to the open arc $L\subset\UC$ between $\sigma_1$ and $\sigma_2$ that contains~$\tau$, with $f(L)\subset\UC$. Then $f'(\tau)\le1$, with $f'(\tau)=1$ if and only if~$f=\id_\UD$.
\item[(ii)] Suppose that $f$ extends continuously to the open arc $L':=\UC\setminus \overline L$, with $f(L')\subset\UC$. Then $f'(\tau)\ge1$, with $f'(\tau)=1$ if and only if~$f=\id_\UD$.
\end{mylist}
\end{lemma}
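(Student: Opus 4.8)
The plan is to transfer the configuration to a half-plane, where the three boundary fixed points become $0$, $\infty$ and a finite point, and then exploit the Schwarz reflection principle across the arc on which $f$ is circle-valued. Concretely, I fix a conformal map $H$ of $\UD$ onto the upper half-plane $\Pi:=\{\zeta:\Im\zeta>0\}$ with $H(\sigma_1)=0$, $H(\sigma_2)=\infty$ and $H(\tau)=x_0$ for some $x_0>0$, chosen so that $H(L)=(0,+\infty)$ and $H(L')=(-\infty,0)$. Writing $g:=H\circ f\circ H^{-1}\in\Hol(\Pi,\Pi)$ and using that $H$ is conformal and finite-to-finite at $\tau\leftrightarrow x_0$ together with $f(\tau)=\tau$, the chain rule for angular derivatives (Lemma~\ref{LM_ChainRule}) gives $g'(x_0)=f'(\tau)$, and $g$ fixes $0$, $x_0$, $\infty$. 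The first routine step is to check, from injectivity of the boundary values of the univalent map $f$ together with $g(x_0)=x_0$, that $g$ maps the relevant ray onto itself: in case (i) one gets $g\big((0,+\infty)\big)=(0,+\infty)$, and in case (ii) one gets $g\big((-\infty,0)\big)=(-\infty,0)$. Indeed the image is an arc of $\Real\cup\{\infty\}$ joining $0$ to $\infty$, and it must be the ray containing, resp.\ avoiding, the fixed value $x_0$.

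For part (i), $g$ is real-valued on $(0,+\infty)$, so by the Schwarz reflection principle it extends to a holomorphic self-map $\tilde g$ of the slit plane $\Omega:=\C\setminus(-\infty,0]$, with $\tilde g(\Omega)\subset\Omega$ because $g(\Pi)\subset\Pi$. The point $x_0$ now lies in the \emph{interior} of $\Omega$ and is a fixed point of $\tilde g$, with $\tilde g'(x_0)=g'(x_0)=f'(\tau)>0$. Applying the Schwarz--Pick lemma on the hyperbolic simply connected domain $\Omega$ (equivalently, conjugating $\Omega$ to $\UD$ with $x_0\mapsto0$) yields $f'(\tau)=|\tilde g'(x_0)|\le1$. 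In the equality case $\tilde g$ is an automorphism of $\Omega$ fixing the interior point $x_0$, hence ``elliptic''; but $\tilde g$ also fixes the two boundary points $0$ and $\infty$, and a nontrivial elliptic automorphism fixes no boundary point, so $\tilde g=\id$ and therefore $f=\id_\UD$.

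For part (ii), $g$ is real-valued on $(-\infty,0)$, and reflecting across this ray produces a holomorphic self-map $\tilde g$ of the slit plane $\Omega':=\C\setminus[0,+\infty)$. Now $x_0$ sits on the slit, i.e.\ on $\de\Omega'$, and corresponds to two distinct prime ends. To make these honest boundary points I would uniformize by $\zeta\mapsto\sqrt\zeta$, mapping $\Omega'$ conformally onto $\Pi$ and turning $\tilde g$ into a self-map $G\in\Hol(\Pi,\Pi)$ with boundary fixed points $0$, $\infty$, $a$ and $-a$, where $a:=\sqrt{x_0}$. By the symmetry of the reflected configuration across the real axis, both $\pm a$ have boundary derivative equal to $f'(\tau)$. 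The Cowen--Pommerenke product inequality (Remark~\ref{RM_CowenPomm}, valid on $\Pi$ by conformal invariance) applied to the two distinct BRFPs $a$ and $-a$ then gives $f'(\tau)^2=G'(a)\,G'(-a)\ge1$, i.e.\ $f'(\tau)\ge1$. Equality forces $G$ to be an automorphism of $\Pi$ fixing the four points $0,\infty,\pm a$, hence $G=\id$, whence $f=\id_\UD$.

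The genuinely delicate points, on which I expect to spend the most care, are the two reflection steps: one must verify that $f$, and hence $g$, really maps the pertinent boundary arc \emph{onto} the corresponding real ray, so that the Schwarz-reflected map is a bona fide self-map of the slit domain rather than merely mapping into its closure; this is exactly where boundary injectivity of the univalent function and the presence of the fixed values $0,x_0,\infty$ enter. The second fiddly point is the derivative bookkeeping in (ii): checking that passing to $\tilde g$ and then conjugating by $\sqrt\zeta$ preserves the angular derivative, so that indeed $G'(a)=G'(-a)=f'(\tau)$ and Remark~\ref{RM_CowenPomm} can be invoked. Everything else reduces to the Schwarz--Pick lemma and the rigidity of M\"obius maps fixing three or more points.
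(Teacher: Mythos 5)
Your proof takes a genuinely different route from the paper's, and the core of it is correct. The paper normalizes so that $\tau\mapsto\infty$, reflects the conjugated map across the arc on which $f$ is unimodular, and reads the bound on $\beta=1/f'(\tau)$ off the Nevanlinna integral representation of the resulting Pick function (whose measure is supported on $[\xi_1,\xi_2]$ in case (i) and on the complementary set in case (ii)). You instead normalize $\sigma_1\mapsto 0$, $\sigma_2\mapsto\infty$, $\tau\mapsto x_0$, so that after reflection $\tau$ becomes an interior fixed point of a self-map of a slit plane in (i), handled by Schwarz--Pick, and a symmetric pair of boundary regular fixed points in (ii), handled by the Cowen--Pommerenke inequality of Remark~\ref{RM_CowenPomm}. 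Your derivative bookkeeping is right: $g'(x_0)=f'(\tau)$ by the chain rule for angular derivatives, the reflected map's ordinary derivative at the now-interior point $x_0$ coincides with the angular derivative, and the computation $G'(\pm a)=f'(\tau)$ after the square-root uniformization goes through because squaring is conformal at $a\neq0$ and carries Stolz angles at $a$ into Stolz angles at $x_0$. The equality cases are also fine (in (i) it is cleanest to note that $\tilde g'(x_0)=f'(\tau)=1$ exactly, not merely in modulus). Your argument is more geometric; the paper's yields an explicit representation formula and treats both cases uniformly without the uniformization step.

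The one genuine gap is precisely at the step you flag, and the justification you sketch for it does not suffice. You need $g\big((0,+\infty)\big)\subset(0,+\infty)$ in (i) and $g\big((-\infty,0)\big)\subset(-\infty,0)$ in (ii); otherwise the reflected map is not a self-map of the slit domain. The claim that ``the image is an arc of $\R\cup\{\infty\}$ joining $0$ to $\infty$'' is unproved: the boundary values of $f$ along the open arc need not tend to the angular limits $\sigma_1,\sigma_2$ at its endpoints (Julia's inequality gives no control on tangential or boundary approach), so the endpoints of the image arc are not known a priori. Worse, in case (ii), even after one rules out $0$ and $\infty$ from the image, a connected subset of $\R\setminus\{0\}$ that merely avoids $x_0$ could still lie in $(0,x_0)$ or in $(x_0,+\infty)$; ``the ray avoiding the fixed value'' is not determined by this. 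The fix is the cross-cut argument the paper uses in Lemma~\ref{LM_multiparam_family}: the curves $f([0,\sigma_1))$ and $f([0,\sigma_2))$, together with the arc $L'$, bound a Jordan domain $\Omega'\subset\UD$ with $\overline{\Omega'}\cap\UC=\overline{L'}$; since $f$ maps the sector of $\UD$ cut off by the two radii into $\Omega'$ and extends continuously to $L'$, one gets $f(L')\subset\overline{L'}$, and since $f|_{L'}$ is a continuous injection of an open arc into $\UC$ (injectivity follows from univalence of $f$ together with openness of the Schwarz reflection), its image is open, hence $f(L')\subset L'$. The same argument applies to $L$ in case (i). With this inserted, your proof is complete.
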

\begin{proof}[Proof of (i)]
Consider $\Phi:=H_\tau\circ f\circ H_\tau^{-1}$, where $H_\tau(\zeta):=i(\tau+\zeta)/(\tau-\zeta)$ maps~$\UD$ conformally onto~$\UH_i:=\{z:\Im z>0\}$ with $H_\tau(\tau)=\infty$. Let $\xi_1<\xi_2$ be the images of $\sigma_1$ and~$\sigma_2$ w.r.t.~$H_\tau$. According to the Schwarz Reflection Principle, $\Phi$ extends to a univalent meromorphic function on~$\ComplexE\setminus[\xi_1,\xi_2]$ with a unique pole located at~$\infty$. In particular, it follows that $\infty,\xi_1,\xi_2\not\in \Phi(\Real\setminus[\xi_1,\xi_2])$ and that $\Phi'(x)>0$ for any $x\in\Real\setminus[\xi_1,\xi_2]$.

Therefore, $\Phi(x_2)-\Phi(x_1)>\xi_2-\xi_1$ for all $x_1<\xi_1$ and~$x_2>\xi_2$.
Now write the Nevanlinna Representation Formula for $\Phi$, see, \textit{e.g.}, \cite[p.\,135--142]{Bhatia}:
\begin{equation}\label{EQ_Nevanlinna}
\Phi(z)=\alpha+\beta z+\int_{\xi_1}^{\xi_2}\frac{1+tz}{t-z}\,\DI\nu(t)\quad\text{for all~$z\in\UH_i$,}
\end{equation}
where $\alpha\in\Real$, $\beta:=1/f'(\tau)\ge0$ and $\nu$ is a finite positive Borel measure on $[\xi_1,\xi_2]$. By the uniqueness of the holomorphic extension,~\eqref{EQ_Nevanlinna} holds also for all $z\in\Complex\setminus[\xi_1,\xi_2]$. Hence for all $x_1<\xi_1$ and~$x_2>\xi_2$ we have
$$
\Phi(x_2)-\Phi(x_1)=(x_2-x_1)\Big[\beta-\int_{\xi_1}^{\xi_2}\!\!\frac{1+t^2}{(x_2-t)(t-x_1)}\,\DI\nu(t)\Big]>\, \xi_2-\xi_1.
$$
It follows that necessarily $\beta\ge1$, with $\beta=1$ only if $\nu([\xi_1,\xi_2])=0$. Thus, $f'(\tau)\le 1$ and the equality $f'(\tau)=1$ holds only if $f\in\Aut(\UD)$. Recalling that, by the condition, $f$ fixes three points on~$\UC$ completes the proof of (i).

\parM{Proof of (ii)} Arguing in a similar way as in the proof of~(i), we see that $\Phi$ extends holomorphically through~$(\xi_1,\xi_2)$ to the lower half-plane  and that $0<\Phi(x_2)-\Phi(x_1)<\xi_2-\xi_1$ whenever $\xi_1<x_1<x_2<\xi_2$. The Nevanlinna Representation takes the following form
$$
\Phi(z)=\alpha\,+\,\beta z\,+\!\!\!\!\int\limits_{\Real\setminus(\xi_1,\xi_2)}\!\!\!\frac{1+tz}{t-z}\,\DI\nu(t)\quad\text{for all~$z\in \Complex\setminus\big(\Real\setminus(\xi_1,\xi_2)\big)$,}
$$
where $\alpha\in\Real$, $\beta:=1/f'(\tau)\ge0$ and $\nu$ is a finite positive Borel measure on $\Real\setminus(\xi_1,\xi_2)$.
Thus from
$$
\Phi(x_2)-\Phi(x_1)=(x_2-x_1)\Big[\beta\,+\!\!\!\! \int\limits_{\Real\setminus(\xi_1,\xi_2)}\!\!\!\frac{1+t^2}{(x_2-t)(x_1-t)}\,\DI\nu(t)\Big]<\, \xi_2-\xi_1
$$
for all $x_1\in(\xi_1,\xi_2)$ and all~$x_2\in(x_1,\xi_2)$ it follows that $\beta\le 1$ and that $\beta=1$ if and only if~$\nu(\Real\setminus(\xi_1,\xi_2))=0$, which finishes the proof.
\end{proof}

\begin{remark}\label{RM_continuity}
In the proof of Theorem~\ref{TH_main} we will deal with monotonic functions on~$[0,1]^n$.
Note that if a function $\mu:[0,1]^n\to\R$ is monotonic and continuous separately in each variable, then it is (jointly) continuous. Indeed, to fix the idea assume that $\mu$ is increasing in each variable. Using ``'variable-wise'' continuity, for any given $a\in(0,1)^n$ and $\varepsilon>0$ one can find $\delta\in(0,+\infty)^n$ such that $a\pm\delta\in[0,1]^n$ and $|\mu(a\pm\delta)-\mu(a)|<\varepsilon$. Then by the monotonicity, $\mu(a)-\varepsilon<\mu(a-\delta)\le\mu(b)\le\mu(a+\delta)<\mu(a)+\varepsilon$ whenever $|b_j-a_j|\le \delta_j$ for all~$j=1,\ldots,n$. A similar argument applies to boundary points of~$[0,1]^n$.
\end{remark}

\subsection{Proof of Theorem~\ref{TH_main}}\label{SS_proofs}
According to Theorem~\ref{TH_embedd}, it is sufficient to show that for any $\phi\in\mathfrak S\setminus\{\id_\UD\}$ there exists an evolution family~$(\psi_{s,t})$ such that $\psi_{0,1}=\phi$ and $\psi_{s,t}\in\mathfrak S$ whenever ${0\le s\le t\le 1}$.

If $\phi\in\Aut(\UD)$, then $\phi=\phi_1$ for some one-parameter semigroup~$(\phi_t)\subset\Aut(\UD)$ having the same BRFPs and the DW-point as~$\phi$ does. In such a case, the functions $\psi_{s,t}:=\phi_{t-s}$ form the desired evolution family.

Therefore, we suppose that $\phi\not\in\Aut(\UD)$. Let us first consider the case, which appears to be the simplest.

\caseU{(a)}{$\mathfrak S=\UF[F]$ with $n\le3$}
By Lemma~\ref{LM_embedd} there exists an evolution family $(\varphi_{s,t})$ such that $\varphi_{0,1}=\phi$.
Then by EF2, $\phi=\varphi_{u,1}\circ\varphi_{0,u}$ for all $u\in[0,1]$. Hence by Lemma~\ref{LM_ChainRule}, $\sigma_j$'s are regular contact points of~$\varphi_{0,u}$ for all~$u\in[0,1]$. Using~\cite[Theorem~3.5]{BRFPLoewTheory} we see that $[0,1]\ni u\mapsto \varphi_{0,u}(\sigma_j)$ is absolutely continuous for each~$j=1,\ldots,n$. Moreover, for all $u\in[0,1]$, the points $\varphi_{0,u}(\sigma_j)$ are pairwise distinct because $\varphi_{0,u}$ is univalent in~$\UD$ and conformal at each $\sigma_j$ (in the sense of~\cite[\S4.3]{Pommerenke2}). Recall that $n\le3$ by our assumption. Note also that $\varphi_{0,1}(\sigma_j)=\phi(\sigma_j)=\sigma_j$ for all~$j$'s.  Therefore, it is easy to construct a family $(g_u)_{u\in[0,1]}\subset\Aut(\UD)$ such that: (i)~$g_u(\varphi_{0,u}(\sigma_j))=\sigma_j$ for each~$j=1,\ldots,n$; (ii)~$g_1=\id_\UD$; (iii)~the functions ${[0,1]\ni u\mapsto g_u(0)}$ and ${[0,1]\ni u\mapsto g'_u(0)}$ are absolutely continuous. For all $t>1$ we set $g_t:=\id_\UD$. Finally, define $\psi_{s,t}:=g_t\circ\varphi_{s,t}\circ g_s^{-1}$ for all $s\ge0$ and all $t\ge s$. Then by~\cite[Lemma~2.8]{SMP}, $(\psi_{s,t})$ is an evolution family. Moreover, by construction $\psi_{0,1}=\phi$ and $\psi_{s,t}\in\mathfrak S$ whenever ${0\le s\le t\le 1}$. Thus for case~(a) the proof is complete.

\caseU{(b)}{$\mathfrak S=\Ut\tau[F]$ with $\tau\in\UC$ and $\Card(F)\le2$}
 If $n=1$, then the proof is essentially the same as in Case~(a). We only have to replace the conditions upon the family~$(g_u)$ by the following:
\begin{itemize}
\item[(i)] $g_u(\varphi_{0,u}(\sigma_1))=\sigma_1$ and $g_u(\varphi_{0,u}(\tau))=\tau$ for all $u\in[0,1]$;
\item[(ii)] $g'_u(\varphi_{0,u}(\tau))=\big(1+u(\phi'(\tau)-1)\big)/\varphi'_{0,u}(\tau)$ for all $u\in[0,1]$,
\end{itemize}
and notice that by~\cite[Theorem~3.5]{BRFPLoewTheory} also the functions $[0,1]\ni u\mapsto\varphi'_{0,u}(\sigma_1)$ and $[0,1]\ni u\mapsto\varphi'_{0,u}(\tau)$ are absolutely continuous.

So let us assume that $n=2$. Denote by~$L'_0$, $L'_1$ and $L'_2$ the pairwise disjoint open arcs of~$\UC$ contained between~$\sigma_1$ and $\sigma_2$, $\sigma_1$ and $\tau$, $\sigma_2$ and $\tau$, respectively.
Denote by~$J$ the set of indices~$j=0,1,2$ for which $\phi$ can be extended continuously to~$L'_j$ with $\phi(L'_j)\subset\UC$. Note that~$\phi'(\tau)\le1$ and $\phi\neq\id_\UD$. Hence, by Lemma~\ref{LM_3p_Loewner}(ii), $J\not\ni 0$.
First we assume that $J=\{1,2\}$. Then apply Case~(a) with~$\{\sigma_1,\sigma_2,\tau\}$ substituted for~$F$. The functions $\psi_{s,t}$, ${0\le s\le t\le 1}$, satisfy the hypothesis of Lemma~\ref{LM_3p_Loewner}(i). Thus $\psi_{s,t}'(\tau)\le 1$, which means that~$\psi_{s,t}\in\Ut\tau[\{\sigma_1,\sigma_2\}]$ whenever ${0\le s\le t\le 1}$.

Now we assume that $\{1,2\}\not\subset J$. Swapping $\sigma_1$ and~$\sigma_2$ if necessary, we have $L_1=L'_0$ and $L_2=L'_1\cup\{\tau\}\cup L'_2$.
Therefore, $\phi$ satisfies the hypothesis of Lemma~\ref{LM_multiparam_family}. Let $(f_a)_{a\in[0,1]^2}$ be the family constructed in this lemma with $z_0:=0$. Recall that by \cite[Theorem~4.14 on p.\,83]{Pommerenke2}, for each $a\in[0,1]^2$, $f_a$ has three pairwise distinct regular contact points $\xi_a^1$, $\xi_a^2$, $\xi_a$ such that $f(\xi_a^j)=\sigma_j$, $j=1,2$, and $f_a(\xi_a)=\tau$. Consider the unique $g_a\in\Aut(\UD)$ such that $g_a(\tau)=\xi_a$ and $g_a(\sigma_j)=\xi_a^j$, $j=1,2$. Then $h_a:=f_a\circ g_a\in\UF[\{\tau,\sigma_1,\sigma_2\}]$ for all~$a\in[0,1]^2$. Now for~$a,b\in[0,1]^2$ with $a_1\le b_1$ and $a_2\le b_2$, let $\psi_{a,b}:=h^{-1}_b\circ h_a$.

Now we are going to prove
\claimM{A}{If $[0,+\infty)\ni t\mapsto a_j(t)\in[0,1]$, $j=1,2$, are non-decreasing absolutely continuous functions with $a_j(0)=0$, $a_j(t)=1$ for all $t\ge 1$, then the functions $\psi_{a(s),a(t)}$, where $a(t):=\big(a_1(t),a_2(t)\big)$, form an evolution family, with $\psi_{a(0),a(1)}=\phi$.}
The family $\phi_{s,t}:=f_{a(t)}^{-1}\circ f_{a(s)}$, $t\ge s\ge 0$, is an evolution family because, by properties (i), (iii) and~(v) from Lemma~\ref{LM_multiparam_family}, $\phi_{s,t}$'s are holomorphic self-maps of~$\UD$ obviously satisfying conditions EF1 and~EF2 and such that $\phi_{s,t}(0)=0$, $\phi'_{s,t}(0)>0$, and $[0,+\infty)\ni t\mapsto |\phi'_{0,t}(0)|$ is absolutely continuous. Moreover, for each~$t\ge0$, $f_{a(t)}\circ\phi_{0,t}=f_{a(0)}=f_{(0,0)}=\phi$ and hence with help of Lemma~\ref{LM_ChainRule} we see that $\phi_{0,t}$ has regular contact points at $\tau$, $\sigma_1$ and $\sigma_2$ and that the angular limits of $\phi_{0,t}$ at these points are equal to $f_{a(t)}^{-1}(\tau)=g_{a(t)}(\tau)$, $f_{a(t)}^{-1}(\sigma_1)=g_{a(t)}(\sigma_1)$, and $f_{a(t)}^{-1}(\sigma_2)=g_{a(t)}(\sigma_2)$, respectively. Finally, $\psi_{a(s),a(t)}=g_{a(t)}^{-1}\circ \phi_{s,t}\circ g_{a(s)}$ for all $s\ge0$ and~$t\ge s$. Thus, as above, we can use \cite[Theorem~3.5]{BRFPLoewTheory} and \cite[Lemma~2.8]{SMP} to make sure that~$(\psi_{a(s),a(t)})$ is also an evolution family. Finally, by construction, $h_{(1,1)}=\id_\UD$ and $h_{(0,0)}=f_{(0,0)}=\phi$. Hence $\psi_{a(0),a(1)}=h_{(1,1)}^{-1}\circ h_{(0,0)}=\phi$. This proves Claim A.\vskip1ex

Note that $\psi_{a,b}$ satisfies the hypothesis of Lemma~\ref{LM_3p_Loewner}(i) when~$a_2=b_2$ and the hypothesis of Lemma~\ref{LM_3p_Loewner}(ii) when~$a_1=b_1$. For any ${a\in[0,1]^2}$ denote $\lambda(a):=\psi'_{(0,0),a}(\tau)$.  Fix an arbitrary~$a_2^0\in[0,1]$. With the help of Remark~\ref{RM_Chain-Rule} and Lemma~\ref{LM_3p_Loewner} we conclude that $\lambda(\cdot,a_2^0)$ is strictly decreasing on~$[0,1]$. Consider now the map $a:[0,+\infty)\to[0,1]^2$ given by ${a(t):=(0,2t)}$ if ${0\le t\le \frac12a_2^0}$, ${a(t):=(2t-a_2^0,a_2^0)}$ if ${\frac12a_2^0\le t\le\frac12(1+a_2^0)}$, ${a(t):=(1,2t)}$ if ${\frac12(1+a_2^0)\le t\le1}$, and ${a(t):=(1,1)}$ otherwise. By construction, $(\psi_{a,b})\subset\mathfrak P[\{\tau,\sigma_1,\sigma_2\}]$ and hence from Claim~A and~\cite[Theorem~1.1]{BRFPLoewTheory} it follows that $\lambda(\cdot,a_2^0)$ is absolutely continuous on~$[0,1]$.
Similar argument applies to $\lambda(a_1^0,\cdot)$ for any~$a_1^0\in[0,1]$. Thus we have proved the following
\claimM{B}{The function $\lambda(a_1,a_2):=\psi'_{(0,0),(a_1,a_2)}(\tau)$, $a_1,a_2\in[0,1]$, is strictly decreasing in~$a_1$ and strictly increasing in~$a_2$. Moreover, it is absolutely continuous in each variable.}

Using Claim~$B$ and taking into account Remark~\ref{RM_continuity}, it is easy to show that there exists $a_1^0\in[0,1)$ and a continuous strictly increasing function $a_2^*:[a_1^0,1]\to[0,1]$ with $a_2^*(a_1^0)=0$ and $a_2^*(1)=1$ such that $\lambda(a_1,a_2^*(a_1))=\phi'(\tau)$ for any~$a_1\in[a_1^0,1]$. We extend it to $[0,a^*_1(0)]$ by setting $a_2^*|_{[0,a^0_1]}\equiv0$.

The function $A(a_1):=[a_1+a_2^*(a_1)]/2$ is strictly increasing and continuous on~$[0,1]$, with $A(0)=0$ and $A(1)=1$. Therefore, it has the continuous and strictly increasing inverse $[0,1]\ni t\mapsto a_1(t)\in[0,1]$. The map ${a^*:[0,1]\to[0,1]^2;}~t\mapsto \big(a_1(t),a_2^*(a_1(t))\big)$ is Lipschitz continuous. Indeed, if $0\le t_1\le t_2\le1$, then $\delta_1:=a_1(t_2)-a_1(t_1)\ge0$, $\delta_2:=a_2^*(a_1(t_2))-a_2^*(a_1(t_1))\ge0$ and $\delta_1+\delta_2=2(t_2-t_1)$.

Now apply Claim~A to $(a_1(t),a_2(t)):=a^*(t)$, where as before we extend $a^*$ to $(1,+\infty)$ by $a^*|_{(1,+\infty)}\equiv(1,1)$. Recall that $(\psi_{a,b})\subset\mathfrak P[\{\tau,\sigma_1,\sigma_2\}]$. Therefore, to complete the proof for Case~(b)  it remains  to notice that by construction and by Claim~B, $t\mapsto\lambda(a^*(t))$ is non-increasing, so that $\psi_{a^*(s),a^*(t)}'(\tau)\le 1$ whenever~$t\ge s\ge0$.

\caseU{(c)}{$\mathfrak S=\Ut\tau[F]$ with $\tau\in\UD$}
If $n=1$, then again the proof is the same as in Case~(a), except that the conditions on the family~$(g_u)$ are replaced by $g_u(\varphi_{0,u}(\sigma_1))=\sigma_1$ and $g_u(\varphi_{0,u}(\tau))=\tau$ for all $u\in[0,1]$ and that we additionally notice that, according to condition EF3 in Definition~\ref{DF_evol_family}, the map $[0,1]\ni u\mapsto \varphi_{0,u}(\tau)\in\UD$ is absolutely continuous.

Therefore, we may assume that $n\ge 2$. Replacing $\phi$ by $g^{-1}\circ\phi\circ g$ with a suitable ${g\in\Aut(\UD)}$, we may also suppose that~$\tau=0$. Then by Loewner's Lemma (Theorem~\ref{TH_LM-Loewner}), the function~$\phi$ satisfies the hypothesis of Lemma~\ref{LM_multiparam_family}. Main step of the proof in Case~(c) is the following statement concerning the family $(f_a)$ constructed in Lemma~\ref{LM_multiparam_family}.

\claimM{C}{There exists a Lipschitz continuous map $a^*:[0,1]\to[0,1]^n$ with non-de\-creasing components $a^*_j$, $j=1,\ldots,n,$ such that  $a^*(0)={(0,\ldots,0)}$, $a^*(1)={(1,\ldots,1)}$, and  $f_{a^*(t)}\circ(z\mapsto e^{i\theta(t)}z)\in\mathfrak P_0[F]$ for all~$t\in[0,1]$ and some $\theta:[0,1]\to\Real$.}
Using Claim~C and arguing essentially in the same way as in the proof of Claim~A, it is not difficult to show that the functions $\psi_{s,t}(z):=e^{-i\theta(t)}(f_{a^*(t)}^{-1}\circ f_{a^*(s)})(e^{i\theta(s)}z)$, $t\ge s\ge0$, where we extend $a^*$ and $\theta$ by setting $a^*|_{[1,+\infty)}\equiv(1,\ldots,1)$,  $\theta|_{[1,+\infty)}\equiv\theta(1)$, form the desired evolution family: $\psi_{0,1}=\phi$ and $\psi_{s,t}\in\mathfrak P_0[F]$ for all $s,t\in[0,1]$ with $s\le t$.

It remains to prove Claim~C. For $a\in[0,1]^n$ and $j\in[1,n]\cap\N$, denote by $L_j(a)$ the arc of~$\UC$ going in the counter-clockwise direction from~$f^{-1}(\sigma_j)$ to $f^{-1}(\sigma_{j+1})$. The condition  $f_{a^*(t)}\circ(z\mapsto e^{i\theta(t)}z)\in\mathfrak P_0[F]$ from Claim~C can be rephrased as $|L_j(a^*(t))|=|L_j|$ for all~$t\in[0,1]$ and all $j=1,\ldots,n$. Denote $\ell_j(a):=|L_j(a)|$ and $\varphi_{a,b}:=f_b^{-1}\circ f_a$ for all $a,b\in[0,1]^n$ with $a_j\le b_j$ for any~$j=1,\ldots,n$. Applying Loewner's Lemma (Theorem~\ref{TH_LM-Loewner}) to the functions $\varphi_{a,a+\delta e_j}$, where $\delta>0$ and $\{e_j\}_{j=1}^n$ is the standard basis in~$\R^n$, and taking into account that $\sum_{j=1}^n\ell_j\equiv2\pi$, we see that $\ell_j(a_1,\ldots,a_n)$ is strictly decreasing in~$a_j$ and strictly increasing in $a_k$ if~${k\neq j}$.
Moreover, again by essentially the same argument as in the proof of Claim~A, $\ell_j$'s are continuous in each variable.
This allows us construct~$a^*$ as follows.

Recall that $f_{(1,\ldots,1)}=\phi$ and hence $\ell_j(1)=\ell_j(0)=|L_j|$ for all~$j=1,\ldots,n$. Therefore, thanks to the continuity and monotonicity of~$\ell_j$, there exists a map ${\tilde a_1:[0,1]^{n-1}\to[0,1]}$ continuous and strictly increasing in each variable such that we have ${\tilde a_1(0,\ldots,0)}=0$, ${\tilde a_1(1,\ldots,1)}=1$, and  $$\hphantom{\ell_1(\tilde a_1(\tilde a_2,a''),\tilde a_2(a''),a'')}\mathllap{\ell_1(\tilde a_1(a'),a')}=|L_1|\quad\text{ for all~$a':=(a_2,\ldots,a_n)\in[0,1]^{n-1}$.}$$

Fix $j\in[2,n]\cap\N$. Recall that $$\ell_j=2\pi-\ell_1-\sum\limits_{k\neq1,j}\ell_k$$ and that $\ell_k(a)$  is strictly increasing in~$a_j$ and in~$a_1$ for any $k\neq1,j$. Therefore, $a':=(a_2,\ldots,a_n)\mapsto\ell_j(\tilde a_1(a'),a')$ is decreasing in~$a_j$ and increasing in~$a_k$ if $k\neq j$. Repeating the above argument for $\ell_j$'s, $j=1,\ldots,n$, replaced by $a'\mapsto \ell_j(\tilde a(a'),a')$'s, $j=2,\ldots,n$, we conclude that there exists a map ${\tilde a_2:[0,1]^{n-2}\to[0,1]}$ continuous and strictly increasing in each variable such that ${\tilde a_2(0,\ldots,0)}=0$, ${\tilde a_2(1,\ldots,1)}=1$, and  $$\ell_j(\tilde a_1(\tilde a_2,a''),\tilde a_2(a''),a'')=|L_j|\quad\text{ for all~$a'':=(a_3,\ldots,a_n)\in[0,1]^{n-2}$ and~$j=1,2$.}$$

Repeat this procedure until we end up with a continuous map ${\hat a:[0,1]\to[0,1]^{n-1}}$ with strictly increasing components such that $\hat a(0)=(0,\ldots,0)$, $\hat a(1)=(1,\ldots,1)$, and $\ell_j(\hat a(a_n),a_n)=|L_j|$ for all $j:=1,\ldots,n$ and all $a_n\in[0,1]$.

It remains to set $a^*:=(\hat a\circ q^{-1},q^{-1})$, where $q^{-1}$ is the inverse of the function $$q(a_n):=\frac1n\Big(a_n+\sum_{j=1}^{n-1}\hat a_j(a_n)\Big),\quad a_n\in[0,1].$$
\proofbox

\subsection{Proof of Corollary~\ref{CR_specific}}
Theorems~1 and~2 imply that the semigroup $\mathfrak S:=\Ut{\,1}[\{-1\}]$ is the union of all evolution families $(\varphi_{s,t})$ lying in~$\mathfrak S$. Note that if $(\varphi_{s,t})$ is such an evolution family, then so is $(\tilde\varphi_{s,t}):=(\varphi_{s+s_0,t+s_0})$ for any~$s_0\ge0$. Therefore,
\begin{equation}\label{EQ_s=0}
\mathfrak S=\big\{\tilde\varphi_{0,t}:t\ge0,~(\tilde\varphi_{s,t})\subset\mathfrak S\text{\small{} is an evolution family}\}.
\end{equation}
For a given evolution family $(\tilde\varphi_{s,t})\subset\mathfrak S$, consider the function of $\Lambda(t):=\log\tilde\varphi_{0,t}'(\sigma)$, ${\sigma:=-1}$. By the very construction, $\Lambda$ is non-decreasing. Moreover, if $\Lambda(t)=\Lambda(s)$ for some ${t>s\ge0}$, then $\tilde\varphi_{s,t}'(\sigma)=1$, see Remark~\ref{RM_Chain-Rule}, and hence $$\tilde\varphi_{0,t}=\tilde\varphi_{s,t}\circ\tilde\varphi_{0,s}=\tilde\varphi_{0,s},$$ because if we had $\mathfrak S\ni\tilde \varphi_{s,t}\neq\id_\UD$, then there would be two different DW-points of~$\tilde\varphi_{s,t}$, ${\tau=1}$ and ${\sigma=-1}$.
Therefore, for any $t_0>0$ there exists a family~$(\hat\varphi_{s,t})_{0\le s\le t\le T_0}$, $T_0:=\Lambda(t_0)$, such that $\hat\varphi_{\Lambda(s),\Lambda(t)}=\tilde\varphi_{s,t}$ whenever $0\le s\le t\le t_0$. Clearly, using, \textit{e.g.}, the one-parameter semigroup $(\phi_t)\subset\mathfrak S\cap\Aut(\UD)$, $\phi_t(z):=\tfrac{z+x_t}{1+x_tz}$, $x_t:=\tfrac{e^t-1}{e^t+1}$, we can extend the family $(\hat\varphi_{s,t})$ to all $t\ge s\ge0$ in such a way that it satisfies conditions EF1 and EF2 in Definition~\ref{DF_evol_family} and $\hat\varphi'_{0,t}(\sigma)=e^t$ for all $t\ge0$. Then by Theorem~\ref{TH_differentiability}, $(\hat\varphi_{s,t})$ is an evolution family.

Taking into account the above argument and making use of \cite[Theorem\,1.1]{BRFPLoewTheory}\,\footnote{Observe that our notation~$\Lambda$ differs coincides with the spectral function in~\cite{BRFPLoewTheory} taken with the \textsl{opposite sign}.}, \cite[Theorem\,1]{Goryainov-Kudryavtseva}, and \cite[Lemma\,1]{GoryainovBRFP}, we conclude that for any $T>0$, the set $${\mathfrak S_T:=\{\varphi\in\mathfrak S:}\,\,{\varphi'(-1)=e^{T}\}}$$ coincides with the set of all functions representable in the form $\varphi(z)=w_z(T)$ for all $z\in\UD$, where $w_z(t)$ is the unique solution to the initial value problem
\begin{equation*}
\ddt{w_z}=G_t(w_z):=\tfrac14(1-w_z)^2(1+w_z)q(w_z,t),\quad t\in[0,T],\qquad w_z(0)=z,
\end{equation*}
with some function $q:\UD\times[0,T]\to\C$ satisfying the following conditions:
\begin{itemize}
\item[(i)] for every $z\in\UD$, $q(z,\cdot)$ is measurable on~$[0,T]$;
\item[(ii)] for a.e.~$t\in[0,T]$, $q(\cdot,t)$ has the following integral representation
\begin{equation*}
q(z,t)=\int\limits_{\UC\setminus\{1\}}\!\!\frac{1-\kappa}{1+\kappa z}\,\DI\nu_t(\kappa),
\end{equation*}
\end{itemize}
where $\nu_t$ is a probability measure on $\UC\setminus\{1\}$, related to $\mu$ and $\alpha$ in~\cite[Theorem\,1]{Goryainov-Kudryavtseva} by $\nu_t=4\alpha\mu|_{\UC\setminus\{1\}}$, with $\alpha\mu(\UC\setminus\{1\})=\tfrac14$ because $G_t'(-1)=1$ for a.e.\,$t\in[0,T]$.

This is what was to be proved. \proofbox

\section{An open problem}\label{S_OpenProblem}
On the one hand, all elements of any evolution family are univalent in~$\UD$, see Remark~\ref{RM_univalent}. Hence a semigroup~$\mathfrak S\subset\Hol(\UD,\UD)$ can admit Loewner-type parametric representation only if actually $\mathfrak S\subset\mathfrak U$.

On the other hand,
 Theorems~\ref{TH_main} and~\ref{TH_embedd} suggest to conjecture that univalence of all elements in $\mathfrak S$ is ``essentially'' sufficient\footnote{Of course, in general case, some additional condition of topological nature would be also required.} for~$\mathfrak S$ to admit Loewner-type parametric representation. Recall also that for $n>3$, Theorem~\ref{TH_main} provides a Loewner-type parametric representation only for the semigroup~$\Ut\tau[F]$ with $\tau\in\UD$.

In this connection, the following question seems to be of considerable interest.

\vskip1ex
\noindent\textbf{Open problem.} \textit{Do the semigroups $\UF[F]$ and $\Ut\tau[F]$ admit Loewner-type parametric representation for any finite set~$F\subset\partial\UD$ and any $\tau\in\partial\UD\setminus F$?}
\vskip1ex

 \noindent Note that, in view of Theorems~\ref{TH_main} and~\ref{TH_embedd}, the affirmative answer for $\Ut\tau[F]$, $\tau\in\partial\UD$, would imply also that for $\UF[F]$.

\section*{Acknowledgement}
Much inspiration for the work under the present paper has been drawn from the scientific publications of and from the personal communication with Prof. Viktor V. Goryainov. The author is also grateful to Prof. Oliver Roth for pointing out reference~\cite{Renggli} essentially used in the proof. The text of the paper has been considerably improved thanks to valuable suggestions of the anonymous referees.

\end{document}